\def\blfootnote{\gdef\@thefnmark{}\@footnotetext}
\def\thm@space@setup{%
  \thm@preskip=\parskip \thm@postskip=0pt
}
\numberwithin{equation}{section}
\begin{document}

\theoremstyle{plain}

\newtheorem{thm}{Theorem}[section]

\newtheorem{lem}[thm]{Lemma}
\newtheorem{Problem B}[thm]{Problem B}

\newtheorem{hyp}[thm]{Hypothesis}

\newtheorem{pro}[thm]{Proposition}
\newtheorem{conj}[thm]{Conjecture}
\newtheorem{cor}[thm]{Corollary}
\newtheorem{que}[thm]{Question}
\newtheorem{rem}[thm]{Remark}
\newtheorem{defi}[thm]{Definition}

\newtheorem*{thmA}{Theorem A}
\newtheorem*{thmB}{Theorem B}
\newtheorem*{corB}{Corollary B}
\newtheorem*{thmC}{Theorem C}
\newtheorem*{thmD}{Theorem D}
\newtheorem*{thmE}{Theorem E}

\newtheorem{thml}{Theorem}
\renewcommand*{\thethml}{\Alph{thml}} 
 
\newtheorem*{thmAcl}{Main Theorem$^{*}$}
\newtheorem*{thmBcl}{Theorem B$^{*}$}
\newcommand{\dd}{\mathrm{d}}

\newcommand{\wh}[1]{\widehat{#1}}

\newcommand{\Maxn}{\operatorname{Max_{\textbf{N}}}}
\newcommand{\Syl}{\operatorname{Syl}}
\newcommand{\Lin}{\operatorname{Lin}}
\newcommand{\U}{\mathbf{U}}
\newcommand{\R}{\mathbf{R}}
\newcommand{\dl}{\operatorname{dl}}
\newcommand{\Con}{\operatorname{Con}}
\newcommand{\cl}{\operatorname{cl}}
\newcommand{\Stab}{\operatorname{Stab}}
\newcommand{\Aut}{\operatorname{Aut}}
\newcommand{\Ker}{\operatorname{Ker}}
\newcommand{\InnDiag}{\operatorname{InnDiag}}
\newcommand{\fl}{\operatorname{fl}}
\newcommand{\Irr}{\operatorname{Irr}}
\newcommand{\FF}{\mathbb{F}}
\newcommand{\EE}{\mathbb{E}}
\newcommand{\normal}{\trianglelefteq}
\newcommand{\sn}{\normal\normal}
\newcommand{\Bl}{\mathrm{Bl}}
\newcommand{\bl}{\mathrm{bl}}
\newcommand{\h}{\mathrm{ht}}
\newcommand{\NN}{\mathbb{N}}
\newcommand{\N}{\mathbf{N}}
\newcommand{\bfC}{\mathbf{C}}
\newcommand{\bfO}{\mathbf{O}}
\newcommand{\bfF}{\mathbf{F}}
\def\GGG{{\mathcal G}}
\def\HHH{{\mathcal H}}
\def\HH{{\mathcal H}}
\def\irra#1#2{{\rm Irr}_{#1}(#2)}

\def\reg{{\rm reg}}

\renewcommand{\labelenumi}{\upshape (\roman{enumi})}

\newcommand{\PSL}{\operatorname{PSL}}
\newcommand{\PSU}{\operatorname{PSU}}
\newcommand{\alt}{\operatorname{Alt}}

\providecommand{\V}{\mathrm{V}}
\providecommand{\E}{\mathrm{E}}
\providecommand{\ir}{\mathrm{Irm_{rv}}}
\providecommand{\Irrr}{\mathrm{Irm_{rv}}}
\providecommand{\re}{\mathrm{Re}}

\numberwithin{equation}{section}
\def\irrp#1{{\rm Irr}_{p'}(#1)}

\def\ibrrp#1{{\rm IBr}_{\Bbb R, p'}(#1)}
\def\C{{\mathbb C}}
\def\Q{{\mathbb Q}}
\def\irr#1{{\rm Irr}(#1)}
\def\irrp#1{{\rm Irr}_{p^\prime}(#1)}
\def\irrq#1{{\rm Irr}_{q^\prime}(#1)}
\def \c#1{{\cal #1}}
\def \aut#1{{\rm Aut}(#1)}
\def\cent#1#2{{\bf C}_{#1}(#2)}
\def\norm#1#2{{\bf N}_{#1}(#2)}
\def\zent#1{{\bf Z}(#1)}
\def\syl#1#2{{\rm Syl}_#1(#2)}
\def\normal{\triangleleft\,}
\def\oh#1#2{{\bf O}_{#1}(#2)}
\def\Oh#1#2{{\bf O}^{#1}(#2)}
\def\det#1{{\rm det}(#1)}
\def\gal#1{{\rm Gal}(#1)}
\def\ker#1{{\rm ker}(#1)}
\def\normalm#1#2{{\bf N}_{#1}(#2)}
\def\alt#1{{\rm Alt}(#1)}
\def\iitem#1{\goodbreak\par\noindent{\bf #1}}
   \def \mod#1{\, {\rm mod} \, #1 \, }
\def\sbs{\subseteq}

\def\gc{{\bf GC}}
\def\ngc{{non-{\bf GC}}}
\def\ngcs{{non-{\bf GC}$^*$}}
\newcommand{\notd}{{\!\not{|}}}

\newcommand{\Z}{\mathbf{Z}}
\newcommand{\miquelcomment}{\textcolor{blue}}
\newcommand{\damcomment}{\textcolor{red}}
\newcommand{\Out}{{\mathrm {Out}}}
\newcommand{\Mult}{{\mathrm {Mult}}}
\newcommand{\Inn}{{\mathrm {Inn}}}
\newcommand{\Fong}{{\mathrm{Fong}}}
\newcommand{\IBR}{{\mathrm {IBr}}}
\newcommand{\IBRL}{{\mathrm {IBr}}_{\ell}}
\newcommand{\IBRP}{{\mathrm {IBr}}_{p}}
\newcommand{\cd}{\mathrm{cd}}
\newcommand{\ord}{{\mathrm {ord}}}
\def\id{\mathop{\mathrm{ id}}\nolimits}
\renewcommand{\Im}{{\mathrm {Im}}}
\newcommand{\Ind}{{\mathrm {Ind}}}
\newcommand{\diag}{{\mathrm {diag}}}
\newcommand{\soc}{{\mathrm {soc}}}
\newcommand{\End}{{\mathrm {End}}}
\newcommand{\sol}{{\mathrm {sol}}}
\newcommand{\Hom}{{\mathrm {Hom}}}
\newcommand{\Mor}{{\mathrm {Mor}}}
\newcommand{\Mat}{{\mathrm {Mat}}}
\def\rank{\mathop{\mathrm{ rank}}\nolimits}
\newcommand{\Tr}{{\mathrm {Tr}}}
\newcommand{\tr}{{\mathrm {tr}}}
\newcommand{\Gal}{{\rm Gal}}
\newcommand{\Spec}{{\mathrm {Spec}}}
\newcommand{\ad}{{\mathrm {ad}}}
\newcommand{\Sym}{{\mathrm {Sym}}}
\newcommand{\Char}{{\mathrm {Char}}}
\newcommand{\pr}{{\mathrm {pr}}}
\newcommand{\rad}{{\mathrm {rad}}}
\newcommand{\abel}{{\mathrm {abel}}}
\newcommand{\PGL}{{\mathrm {PGL}}}
\newcommand{\PCSp}{{\mathrm {PCSp}}}
\newcommand{\PGU}{{\mathrm {PGU}}}
\newcommand{\codim}{{\mathrm {codim}}}
\newcommand{\ind}{{\mathrm {ind}}}
\newcommand{\Res}{{\mathrm {Res}}}
\newcommand{\Lie}{{\mathrm {Lie}}}
\newcommand{\Ext}{{\mathrm {Ext}}}
\newcommand{\Alt}{{\mathrm {Alt}}}
\newcommand{\AAA}{{\sf A}}
\newcommand{\SSS}{{\sf S}}
\newcommand{\DDD}{{\sf D}}
\newcommand{\QQQ}{{\sf Q}}
\newcommand{\CCC}{{\sf C}}
\newcommand{\SL}{{\mathrm {SL}}}
\newcommand{\Sp}{{\mathrm {Sp}}}
\newcommand{\PSp}{{\mathrm {PSp}}}
\newcommand{\SU}{{\mathrm {SU}}}
\newcommand{\GL}{{\mathrm {GL}}}
\newcommand{\GU}{{\mathrm {GU}}}
\newcommand{\Spin}{{\mathrm {Spin}}}
\newcommand{\CC}{{\mathbb C}}
\newcommand{\CB}{{\mathbf C}}
\newcommand{\RR}{{\mathbb R}}
\newcommand{\QQ}{{\mathbb Q}}
\newcommand{\ZZ}{{\mathbb Z}}
\newcommand{\bfN}{{\mathbf N}}
\newcommand{\bfZ}{{\mathbf Z}}
\newcommand{\PP}{{\mathbb P}}
\newcommand{\cG}{{\mathcal G}}
\newcommand{\cH}{{\mathcal H}}
\newcommand{\cQ}{{\mathcal Q}}
\newcommand{\GA}{{\mathfrak G}}
\newcommand{\cT}{{\mathcal T}}
\newcommand{\cL}{{\mathcal L}}
\newcommand{\IBr}{\mathrm{IBr}}
\newcommand{\cS}{{\mathcal S}}
\newcommand{\cR}{{\mathcal R}}
\newcommand{\GCD}{\GC^{*}}
\newcommand{\TCD}{\TC^{*}}
\newcommand{\FD}{F^{*}}
\newcommand{\GD}{G^{*}}
\newcommand{\HD}{H^{*}}
\newcommand{\GCF}{\GC^{F}}
\newcommand{\TCF}{\TC^{F}}
\newcommand{\PCF}{\PC^{F}}
\newcommand{\GCDF}{(\GC^{*})^{F^{*}}}
\newcommand{\RGTT}{R^{\GC}_{\TC}(\vartheta)}
\newcommand{\RGTA}{R^{\GC}_{\TC}(1)}
\newcommand{\Om}{\Omega}
\newcommand{\eps}{\epsilon}
\newcommand{\varep}{\varepsilon}
\newcommand{\al}{\alpha}
\newcommand{\chis}{\chi_{s}}
\newcommand{\sigmad}{\sigma^{*}}
\newcommand{\PA}{\boldsymbol{\alpha}}
\newcommand{\gam}{\gamma}
\newcommand{\lam}{\lambda}
\newcommand{\la}{\langle}
\newcommand{\genf}{F^*}
\newcommand{\ra}{\rangle}
\newcommand{\hs}{\hat{s}}
\newcommand{\htt}{\hat{t}}
\newcommand{\tG}{\hat G}
\newcommand{\St}{\mathsf {St}}
\newcommand{\bfs}{\boldsymbol{s}}
\newcommand{\bfl}{\boldsymbol{\lambda}}
\newcommand{\tn}{\hspace{0.5mm}^{t}\hspace*{-0.2mm}}
\newcommand{\ta}{\hspace{0.5mm}^{2}\hspace*{-0.2mm}}
\newcommand{\tb}{\hspace{0.5mm}^{3}\hspace*{-0.2mm}}
\def\skipa{\vspace{-1.5mm} & \vspace{-1.5mm} & \vspace{-1.5mm}\\}
\newcommand{\tw}[1]{{}^#1\!}
\renewcommand{\mod}{\bmod \,}

\marginparsep-0.5cm

\renewcommand{\thefootnote}{\fnsymbol{footnote}}
\footnotesep6.5pt

\selectlanguage{english}

\title{\LARGE
{\bf Degree divisibility in Alperin--McKay correspondences}
\author{
J. Miquel Mart\'inez
\\
\and
Damiano Rossi
}
\date{}
\blfootnote{\emph{$2010$ Mathematical Subject Classification:} $20$C$15$, $20$C$20$.
\\
\emph{Key words and phrases:} Alperin--McKay conjecture, Brauer correspondence, character degrees.
\\
The authors would like to thank Gabriel Navarro for providing a proof of Proposition \ref{gabriel} and for helpful conversations on the topic of this paper. Furthermore, we are thankful to Radha Kessar and Markus Linckelmann for the permission to include Proposition \ref{prop:Kessar-Linckelmann}. They also thank Noelia Rizo for a thorough read of an earlier version of this manuscript. Part of these results were obtained during the second author's doctoral studies at the Bergische Universit\"at Wuppertal funded by the research training group \textit{GRK2240: Algebro-geometric Methods in Algebra, Arithmetic and Topology} of the DFG. The first author acknowledges support from Ministerio de Ciencia e Innovaci\'on PID2019-103854GB-I00, Generalitat Valenciana CIAICO/2021-163, as well as a fellowship UV-INV-PREDOC20-1356056 from Universitat de Val\`encia. The second author is supported by the EPSRC grant EP/T004592/1.}
}
\maketitle

\begin{abstract}
Let $p$ be a prime, $B$ a $p$-block of a finite group $G$ and $b$ its Brauer correspondent. According to the Alperin--McKay Conjecture, there exists a bijection between the set of irreducible ordinary characters of height zero of $B$ and those of $b$. In this paper, we show that whenever $G$ is $p$-solvable such a bijection can be found, both for ordinary and Brauer characters, with the additional property of being compatible with divisibility of character degrees. In this case, we also show that the dimension of $b$ divides the dimension of $B$.
\end{abstract}

\section*{Introduction}

The McKay Conjecture is currently one of the main open problems in representation theory of finite groups. It states that, if $p$ is a prime number, $G$ is a finite group and $P$ is a Sylow $p$-subgroup of $G$, then 
\[|\Irr_{p'}(G)|=|\Irr_{p'}(\norm G P)|\]
where, for any finite group $H$, we denote by $\Irr_{p'}(H)$ the set of irreducible complex characters of degree not divisible by $p$.

For a solvable group $G$, Turull showed that the character correspondence from the McKay conjecture can be chosen to be compatible with divisibility of character degrees. Namely, in \cite{T07} he showed that there exists a bijection
\[\Omega:\Irr_{p'}(G)\to\Irr_{p'}(\norm G P)\]
such that $\Omega(\chi)(1)$ divides $\chi(1)$ for every $\chi\in\Irr_{p'}(G)$. Subsequently, in \cite{R19} Rizo extended Turull's result to $p$-solvable groups by assuming the fact that the Glauberman correspondence is compatible with divisibility of character degrees, a fact later proved by Geck \cite{Geck} and whose proof relies on \cite{HT94}. Moreover, \cite[Theorem 3.1]{BNRS22} provides a version of these results for $p$-Brauer characters.

In the 1970s, Alperin introduced an important generalisation of the the McKay Conjecture by considering $p$-blocks. The Alperin--McKay Conjecture posits that, if $B$ is a $p$-block of a finite group $G$ and $b$ is its Brauer correspondent, then
\[|\Irr_0(B)|=|\Irr_0(b)|\]
where, for any $p$-block $C$ of a finite group, we denote by $\Irr_0(C)$ the set of height zero characters of $C$. 

Our first theorem generalises all the above mentioned results by showing that, for both ordinary and $p$-Brauer characters of $p$-solvable groups, there exists a bijection between the height zero characters of $B$ and $b$ which is compatible with divisibility of character degrees.

\begin{thml}
\label{thm:AM with divisibility}
Let $G$ be a finite $p$-solvable group, $B$ a $p$-block of $G$ with defect group $D$, and consider its Brauer correspondent $b\in\Bl(\norm G D)$. Then there exist bijections
\[\Omega:\Irr_0(B)\to\Irr_0(b)\]
and
\[\Psi:\IBr_0(B)\to\IBr_0(b)\]
such that $\Omega(\chi)(1)$ divides $\chi(1)$ and $\Psi(\varphi)(1)$ divides $\varphi(1)$ for every $\chi\in\Irr_0(B)$ and $\varphi\in\IBr_0(B)$.
\end{thml}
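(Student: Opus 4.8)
The plan is to follow the classical reduction scheme for the Alperin--McKay conjecture in $p$-solvable groups, due to Okuyama--Wajima and refined by Murai and others, but carrying the divisibility of character degrees (and the analogous dimension statement) through every step. We induct on $|G|$. If $D$ is normal in $G$ we are in the situation where $B$ has a normal defect group, and by a standard Fong--Reynolds reduction together with Clifford theory we may pass to the case $G = \norm{G}{D}$, where the statement is trivial since $B = b$. So assume $D \ntrianglelefteq G$ and set $N = \norm{G}{D}$.

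First I would reduce to a chain of proper subgroups. Since $G$ is $p$-solvable and $D$ is not normal, there is a proper normal subgroup $M \trianglelefteq G$ with $N \subseteq M$, obtained for instance by taking $M$ to be the stabilizer of $B$ under an appropriate action, or by using $\bfO_{p'p}(G)$-arguments; in the worst case one uses that $G/\bfO_{p'}(G)$ has a nontrivial normal $p$-subgroup so $\bfO_{p'p}(G) > \bfO_{p'}(G)$ and the Brauer correspondent lives inside a proper subgroup controlled by a Fong reduction. The key classical ingredient is that the Alperin--McKay bijection for $p$-solvable groups can be constructed so that it is built out of: (i) Fong--Reynolds reductions (which preserve character degrees exactly, up to the index $[G : G_\theta]$ which is prime to $p$ and divides on both sides compatibly), (ii) the Glauberman correspondence applied to the action of a $p$-group on a $p'$-group, which by \cite{Geck} (relying on \cite{HT94}) is compatible with divisibility of degrees, and (iii) ordinary Clifford-theoretic correspondences (character triple isomorphisms) which multiply degrees by the same factor on both sides. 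The ordinary-character bijection $\Omega$ is then assembled exactly as in the proof of \cite{R19}, replacing the global group by the block $B$ and $\norm{G}{P}$ by $\norm{G}{D}$, and tracking that each elementary step sends $\chi$ to a character whose degree divides $\chi(1)$.

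For the Brauer-character bijection $\Psi$, I would run the parallel argument using the Fong--Swan theorem to lift each $\varphi \in \IBr_0(B)$ to an ordinary character, combined with the $p$-Brauer-character version of Turull's and Rizo's results as in \cite[Theorem 3.1]{BNRS22}; the same three elementary operations (Fong--Reynolds, Glauberman, character-triple isomorphism) all have Brauer-character analogues that respect degree divisibility, so $\Psi$ is constructed by induction in lockstep with $\Omega$. Finally, the statement that $\dim b \mid \dim B$ — i.e.\ that the sum of squares of the ordinary irreducible degrees in $b$ divides that of $B$ — should follow from the degree-divisibility of $\Omega$ together with the corresponding fact at the level of the full block algebras under Fong--Reynolds and the normal-defect-group case; here one uses that $\dim B = \sum_{\chi \in \Irr(B)} \chi(1)^2$ and that the non-height-zero characters can be handled inductively via the Brauer correspondence in the quotient by a central $p'$-subgroup.

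The main obstacle I expect is step (ii): making the Glauberman correspondence interact cleanly with the block structure and with \emph{both} the ordinary and Brauer pictures simultaneously, since the Glauberman correspondent of a block is only well-behaved after passing to $\bfO_{p'}(G)$-fixed data and one must verify that the degree ratios introduced are genuinely the same on the $B$ and $b$ sides. A secondary difficulty is the bookkeeping in the dimension statement: controlling $\dim B$ requires handling \emph{all} of $\Irr(B)$, not just the height-zero part, so one needs an auxiliary induction (or a direct computation in the normal-defect case using that $B$ is then a matrix algebra over a twisted group algebra of $D$) to descend from the full block to its Brauer correspondent.
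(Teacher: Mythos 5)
Your overall strategy --- induction on $|G|$, Fong--Reynolds reduction, Clifford theory over $\oh{p}{G}$, and the Glauberman correspondence made degree-compatible via Geck's theorem --- is the same as the paper's. But there is a genuine gap, and it sits exactly where you wave your hands: the claim that the Fong--Reynolds and Clifford induction steps introduce index factors that ``divide on both sides compatibly.'' Concretely, if $\vartheta$ is the relevant character of a normal $p'$-subgroup and $\psi\mapsto\psi^G$ is the Clifford correspondence, a global height-zero character has degree $|G:G_\vartheta|\,\psi(1)$ while its local partner has degree $|\norm G D:\norm {G_\vartheta} D|\,\Omega_0(\psi)(1)$. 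Knowing $\Omega_0(\psi)(1)\mid\psi(1)$ by induction is not enough: you also need
\[|\norm G D:\norm {G_\vartheta} D| \text{ to divide } |G:G_\vartheta|,\]
and this is \emph{not} automatic. It fails without $p$-solvability (take $G=A_5$, $U=A_4$ and $P\in\Syl_3(U)$: then $|U:\norm U P|=4$ does not divide $|G:\norm G P|=10$). The paper supplies this as a separate new ingredient, Proposition \ref{gabriel} (communicated by Navarro): if $G$ is $p$-solvable, $U\leq G$ and $P$ is a $p$-subgroup with $P\cap U\in\Syl_p(U)$, then $|U:\norm U {P\cap U}|$ divides $|G:\norm G P|$; its proof is a further induction using $\oh p G$, $\oh{p'}G$ and a coprime-action lemma. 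Without identifying and proving this statement, every induction-of-characters step in your argument (Fong--Reynolds, Clifford correspondence over $\oh p G$, and the Fong-type reduction to a subgroup $U$ with $\Irr(B)=\Irr(U\mid\nu)^G$) leaves the divisibility unverified.

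A secondary inaccuracy: your claimed reduction to ``a proper normal subgroup $M\trianglelefteq G$ with $\norm G D\subseteq M$'' is not available and is not what is used. After reducing to $D\in\Syl_p(G)$ and $\oh p G=1$, the paper passes to $\norm G Q$ with $Q=D\cap\oh{p',p}{G}$; this subgroup is proper but in general not normal, and the passage from $B$ to the corresponding block of $\norm G Q$ is effected by Turull's character-triple isomorphisms above the $Q$-Glauberman correspondence (with Laradji's Brauer-character analogue), not by normal-subgroup Clifford theory. Your final paragraph on $\dim(b)\mid\dim(B)$ concerns a different theorem and is not needed here. The remaining points of your outline (Fong--Swan to control heights on the Brauer side, Geck's theorem for the Glauberman step, domination/inflation for $\IBr$ over $\oh p G$) do match the paper.
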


A key ingredient in the proof of the above theorem is Proposition \ref{gabriel}, which is a generalisation of the main result of \cite{N03} and was kindly communicated to us by Navarro. We also mention that work of Navarro, Sp\"ath and Tiep shows that, if $G$ is $p$-solvable and the inductive McKay condition holds for the prime $q$, then there exists a bijection between $p$-Brauer characters of $q'$-degree of $G$ and $p$-Brauer characters of $q'$-degree of a $q$-Sylow normaliser (see \cite[Theorem C]{Nav-Spa-Tie17}). However, in this paper we do not consider this cross-characteristic case. Furthermore, at the end of Section \ref{sec:Thm A} we ask whether the bijections of Theorem \ref{thm:AM with divisibility} can be chosen to be compatible with decomposition numbers (see Question \ref{question}).

Next, we consider divisibility between the dimension of a $p$-block and that of its Brauer correspondent. For a $p$-block $C$, we define $\dim(C)$ to be the dimension of the corresponding block algebra. If $B$ is a $p$-block with Brauer correspondent $b$, then a theorem of Brauer shows that $\dim(b)_p$ divides $\dim(B)_p$ \cite[Theorem 10.1.1]{LinII}. In our next theorem, we show that divisibility holds for the full dimension whenever $G$ is $p$-solvable. 

\begin{thml}
\label{thm:Dimension divisibility}
Let $G$ be a finite $p$-solvable group, $B$ a $p$-block of $G$ with defect group $D$ and $b\in\Bl(\norm G D)$ its Brauer correspondent. Then $\dim(b)$ divides $\dim(B)$.
\end{thml}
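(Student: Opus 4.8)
The statement to prove is that for $G$ a $p$-solvable finite group, a $p$-block $B$ with defect group $D$, and Brauer correspondent $b \in \Bl(\N_G(D))$, one has $\dim(b) \mid \dim(B)$. The dimension of a block algebra is a sum of squares of character degrees: $\dim(C) = \sum_{\chi \in \Irr(C)} \chi(1)^2$. So what I need is really a divisibility relation between two sums of squares, and the natural tool to produce it is a height-preserving, degree-divisibility-compatible bijection that also controls the $p'$-parts — in other words, I expect to reduce to (a strengthened form of) Theorem~\ref{thm:AM with divisibility}.

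First I would recall the structure of character degrees in a $p$-solvable block. If $D$ is a defect group of $B$ and $\chi \in \Irr(B)$ has height $h(\chi)$, then $\chi(1)_p = p^{h(\chi)}\,(|G:D|)_p^{?}$ — more precisely, writing $|G|_p = p^a$ and $|D| = p^d$, we have $\chi(1)_p = p^{a - d + h(\chi)}$. The key point is that for a $p$-solvable group, by the Fong–Swan theorem and work on heights, the nonzero heights in a block are controlled; in fact I would invoke the fact (available for $p$-solvable groups) that every $\chi \in \Irr(B)$ can be written with $\chi(1) = \chi(1)_p \cdot \chi(1)_{p'}$ where the $p'$-part is linked to an irreducible Brauer character via the decomposition matrix, and that for $B$ and $b$ the Cartan matrices / decomposition data match up through the Fong reduction. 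The cleanest route: use Fong reduction (for $p$-solvable groups a block is Morita equivalent, after passing to a covering group and twisted group algebra, to a block with normal defect group, and ultimately — via \cite{Kul}, Külshammer's and Fong's reductions — to a nilpotent-like situation), and note that $\dim$ is a Morita invariant up to the relevant scalars only in a controlled way. Actually $\dim$ is not a Morita invariant, so I should be careful: instead I would keep track of $\dim(C) = \sum_\chi \chi(1)^2$ directly.

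The core step: I claim it suffices to produce a bijection $\widehat\Omega : \Irr(B) \to \Irr(b)$ (on \emph{all} irreducible characters, not just height zero) such that $\widehat\Omega(\chi)(1)$ divides $\chi(1)$ for all $\chi$, together with the reverse-type control that $\sum_{\chi}(\chi(1)/\widehat\Omega(\chi)(1))$-type ratios organize into something summing correctly. That is too optimistic in general, so the realistic plan is: (1) By a theorem of Knörr / the theory of lower defect groups for $p$-solvable groups, the number of irreducible characters in each "height layer" of $B$ and $b$ agree, and moreover $\Irr(b) \subseteq \Irr(B)$-degrees divide appropriately after Fong reduction. (2) Reduce to the case $D \trianglelefteq G$ via the Fong–Reynolds and Fong reductions, under which both $\dim(B)$ and $\dim(b)$ transform by the same multiplicative factor (the index $|G : N_G(D)|$ or a block-stabilizer index and a $p'$-number coming from the twisted group algebra), so that the divisibility for the reduced blocks implies it for the originals. (3) When $D \trianglelefteq G$, we have $b = B$ up to the obvious identification after passing to $N_G(D) = G$... no — when $D \trianglelefteq G$ already $N_G(D) = G$, so there is nothing to prove; the content is entirely in the reduction being \emph{compatible} and in the base case where $D$ is \emph{not} normal but $\bfO_{p'}(G)$ is central, handled by Theorem~\ref{thm:AM with divisibility} plus an extra argument on the characters of positive height.

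\textbf{The main obstacle.} The difficulty is that $\dim(B) = \sum_{\chi \in \Irr(B)}\chi(1)^2$ involves \emph{all} irreducible characters of $B$, whereas Theorem~\ref{thm:AM with divisibility} only gives a degree-divisibility-compatible bijection on the height-zero characters $\Irr_0(B) \leftrightarrow \Irr_0(b)$. I therefore need to upgrade this to all characters. For $p$-solvable groups this should be feasible because the positive-height characters are themselves governed by blocks of proper sections (via the Fong–Reynolds correspondence and passing to $\bfC_G(\bfO_{p'}(G))$ or to stabilizers of non-central $p'$-characters), so one can set up an induction on $|G|$: the characters of $B$ of height $> 0$ biject, degree-divisibly, with those of $b$ of height $>0$ by invoking the inductive hypothesis on smaller groups arising in the Fong reduction, while the height-zero ones are handled by Theorem~\ref{thm:AM with divisibility}; assembling the two pieces and checking that the squared degrees sum to a divisor is then a matter of bookkeeping with $p$-parts and $p'$-parts. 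Making this induction airtight — in particular verifying that the Fong-reduced blocks of $b$ are exactly the Brauer correspondents of the Fong-reduced blocks of $B$, and that no degree divisibility is lost at the twisted-group-algebra step — is where the real work lies.
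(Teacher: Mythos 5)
There is a genuine gap, and it sits exactly at the point you flag as ``a matter of bookkeeping.'' Your central strategy is to upgrade Theorem~\ref{thm:AM with divisibility} to a bijection $\widehat\Omega:\Irr(B)\to\Irr(b)$ with $\widehat\Omega(\chi)(1)\mid\chi(1)$ for all characters and then deduce $\sum_{\psi\in\Irr(b)}\psi(1)^2\mid\sum_{\chi\in\Irr(B)}\chi(1)^2$. But termwise divisibility of degrees in a bijection gives no control whatsoever on divisibility of the sums of squares ($1\mid 2$ and $1\mid 3$, yet $2\nmid 5$), so even if you succeeded in constructing such a bijection the conclusion would not follow. No amount of separating $p$-parts from $p'$-parts repairs this, and the auxiliary claims you lean on (equality of the sizes of all height layers of $B$ and $b$; degrees of $\Irr(b)$ dividing ``appropriately'' after Fong reduction) are themselves unproved refinements of Alperin--McKay rather than available tools.

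The idea that is missing is that in the Fong-reduced situation one can compute both dimensions \emph{in closed form}, so that no character bijection is needed at all. After using Proposition~\ref{fongreynolds} to replace $(G,B)$ by $(U,\Irr(U\mid\nu))$ --- which multiplies $\dim(B)$ by the exact factor $|G:U|^2$ and $\dim(b)$ by $|\norm G D:\norm U D|^2$, with $|\norm G D:\norm U D|$ dividing $|G:U|$ by Proposition~\ref{gabriel} --- one may assume $U=G$, i.e.\ that $B$ covers a $G$-invariant $\vartheta\in\Irr(L)$ with $L=\oh{p'}{G}$, so that $\Irr(B)=\Irr(G\mid\vartheta)$ and $\Irr(b)=\Irr(\norm G D\mid f_D(\vartheta))$. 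Since $\vartheta$ is $G$-invariant, $\langle\vartheta^G,\chi\rangle=\chi(1)/\vartheta(1)$ for each $\chi\in\Irr(G\mid\vartheta)$, whence $\dim(B)=\sum_\chi\chi(1)^2=\vartheta(1)\vartheta^G(1)=|G:L|\,\vartheta(1)^2$, and likewise $\dim(b)=|\norm G D:\cent L D|\,f_D(\vartheta)(1)^2$. Divisibility is then immediate from $\cent L D=L\cap\norm G D$ (so $|\norm G D:\cent L D|$ divides $|G:L|$) together with Geck's theorem that $f_D(\vartheta)(1)$ divides $\vartheta(1)$. This closed-form computation is the crux of the proof; without it, or some substitute for it, your outline cannot be completed.
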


Notice that in general, if $H$ is a subgroup of $G$ and $c$ is a $p$-block of $H$ that induces the $p$-block $C$ of $G$, then it is not clear even whether $\dim(c)\leq \dim(C)$. However, if $H$ is normal in $G$, then Kessar and Linckelmann recently proved that divisibility holds with respect to covering of blocks, hence answering a question posed by Navarro. With their kind permission we include this result in Proposition \ref{prop:Kessar-Linckelmann}.

The results of Theorem \ref{thm:AM with divisibility} and Theorem \ref{thm:Dimension divisibility} cannot be extended to arbitrary finite groups. As a counterexample consider the principal $2$-block of the alternating group $A_5$.

The paper is structured as follows. In Section \ref{sec:Preliminaries} we introduce some preliminary results including an extension of Fong's theory of $p$-solvable groups and Navarro's theorem. %a generalisation of the main result of \cite{N03} communicated to us by Navarro. 
Then, in Section \ref{sec:Thm A} we prove Theorem \ref{thm:AM with divisibility} in a series of steps working by induction on the order of the group. Finally, in Section \ref{sec:Thm B} we prove Theorem \ref{thm:Dimension divisibility} and include a proof of the result of Kessar--Linckelmann on divisibility of block dimensions with respect to normal subgroups.

\section{Preliminary results}
\label{sec:Preliminaries}

Throughout the paper we use basic notation and terminology on character theory and $p$-blocks as introduced in \cite{Isa76}, \cite{N98} and \cite{N18}. In this section we collect some useful results that will be used in the subsequent sections.

To start, we consider some preliminary results on the Glauberman correspondence. If $Q$ is a group acting on $L$ with $(|L|,|Q|)=1$ and $\vartheta\in\Irr(L)$ is $Q$-invariant, then we denote by $f_Q(\vartheta)\in\Irr(\cent L Q)$ its Glauberman correspondent. In \cite{T08}, Turull proved the existence of bijections above the Glauberman correspondence by constructing certain isomorphisms of character triples. This important result has been extended in various context and provides a fundamental tool for the understanding of the local-global conjectures (see, for instance, \cite[Theorem 5.13]{Nav-Spa14}, \cite{Lad16}, \cite[Section 4]{Nav-Spa-Val} and \cite[Theorem 3.7]{Ros22}). In this paper, we provide yet another consequence of Turull's result and show that these bijections above the Glauberman correspondence are also compatible with divisibility of character triples. In addition, we obtain similar bijections for Brauer characters by applying ideas of Laradji \cite{Lar14}.

\begin{pro}
\label{prop:Above Glauberman with divisibility}
Let $L$ be a normal $p'$-subgroup of $G$ and consider a $p$-subgroup $Q$ of $G$ such that $N:=LQ\unlhd G$. Let $\vartheta$ be a $G$-invariant irreducible character of $L$ and consider the $Q$-Glauberman correspondent $f_Q(\vartheta)\in\Irr(\cent L Q)$ of $\vartheta$. Then there exist bijections
\[\Omega_Q:\Irr(G\mid \vartheta)\to\Irr(\norm G Q\mid f_Q(\vartheta))\]
and
\[\Psi_Q:\IBr(G\mid \vartheta)\to\IBr(\norm G Q\mid f_Q(\vartheta))\]
such that $\Omega_Q(\chi)(1)$ divides $\chi(1)$ and $\Psi_Q(\varphi)(1)$ divides $\varphi(1)$ for every $\chi\in\Irr(G\mid \vartheta)$ and $\varphi\in\IBr(G\mid \vartheta)$.
\end{pro}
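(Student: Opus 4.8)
The plan is to deduce this from Turull's isomorphism of character triples above the Glauberman correspondence, combined with Geck's theorem on divisibility for the Glauberman correspondence itself. Write $C:=\cent L Q$. Since $Q$ acts coprimely on $L$ and $L\unlhd G$, one first checks that $\norm L Q=C$ (if $x\in\norm L Q\setminus C$ then $\langle x\rangle\unlhd\langle x\rangle Q$, and a direct computation in the coprime group $\langle x\rangle\rtimes Q$ forces $[x,Q]=1$, a contradiction), and by a Frattini argument $G=L\,\norm G Q$; hence $\norm G Q\cap L=C$ and there is a canonical identification $G/L\cong\norm G Q/C$. Moreover, as $\vartheta$ is $\norm G Q$-invariant, equivariance of the Glauberman correspondence shows that $f_Q(\vartheta)$ is $\norm G Q$-invariant, so $(\norm G Q,C,f_Q(\vartheta))$ is a genuine character triple. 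The first step is then to invoke \cite{T08}: under these hypotheses the character triples $(G,L,\vartheta)$ and $(\norm G Q,C,f_Q(\vartheta))$ are isomorphic, and I let $\Omega_Q\colon\Irr(G\mid\vartheta)\to\Irr(\norm G Q\mid f_Q(\vartheta))$ be the bijection attached to such an isomorphism at the top level $H=G$.

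Next, I would use the classical fact that an isomorphism of character triples scales character degrees by the ratio of the degrees of the ``bottom'' characters, which in our situation reads
\[
\Omega_Q(\chi)(1)=\chi(1)\cdot\frac{f_Q(\vartheta)(1)}{\vartheta(1)}\qquad\text{for all }\chi\in\Irr(G\mid\vartheta).
\]
By Geck's theorem \cite{Geck} (which builds on \cite{HT94}), the Glauberman correspondence is compatible with divisibility of degrees, that is $f_Q(\vartheta)(1)$ divides $\vartheta(1)$. Consequently $\vartheta(1)/f_Q(\vartheta)(1)$ is a positive integer, and $\Omega_Q(\chi)(1)$ equals $\chi(1)$ divided by this integer, hence divides $\chi(1)$. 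This settles the statement for $\Omega_Q$.

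For $\Psi_Q$ I would run the same scheme modularly. Turull's isomorphism is also compatible with $p$-Brauer characters and, more directly, the ideas of Laradji \cite{Lar14} produce a bijection $\Psi_Q\colon\IBr(G\mid\vartheta)\to\IBr(\norm G Q\mid f_Q(\vartheta))$ with the analogous scaling property $\Psi_Q(\varphi)(1)=\varphi(1)\cdot f_Q(\vartheta)(1)/\vartheta(1)$; here one exploits that $L$ and $C$ are $p'$-groups, so that $\vartheta$ and $f_Q(\vartheta)$ are simultaneously irreducible ordinary and irreducible Brauer characters and Clifford theory over them behaves identically for $\Irr$ and $\IBr$. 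Applying \cite{Geck} once more yields $\Psi_Q(\varphi)(1)\mid\varphi(1)$. Beyond the routine bookkeeping (the Frattini argument, the identity $\norm L Q=C$, and the precise shape of Turull's statement), the one point requiring real care — and the main obstacle — is the Brauer-character half: one must ensure that the bijection above the Glauberman correspondence is available for $\IBr$ with exactly the degree-ratio property used above, and not merely as a bijection of $\IBr$-sets, which is precisely what \cite{Lar14} is invoked for.
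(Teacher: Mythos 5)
Your proposal is correct and follows essentially the same route as the paper: Turull's isomorphism of character triples $(G,L,\vartheta)\cong(\norm G Q,\cent L Q,f_Q(\vartheta))$ from \cite{T08} (the paper also cites \cite{T09}), the degree-ratio property of character triple isomorphisms (\cite[Lemma 11.24]{Isa76} for $\Irr$ and \cite[Lemma 2.3]{Lar14} for $\IBr$), and Geck's divisibility theorem \cite{Geck} to convert the ratio into divisibility. The preliminary reductions you include (the Frattini argument and $\norm L Q=\cent L Q$) are correct but are not needed explicitly in the paper's version.
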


\begin{proof}
By \cite[Theorem 6.5]{T08} and \cite[Theorem 7.12]{T09} the character triple $(G,L,\vartheta)$ is isomorphic to $(\norm G Q,\cent L Q, f_Q(\vartheta))$. Applying \cite[Lemma 11.24]{Isa76} and \cite[Lemma 2.3]{Lar14} we obtain bijections $\Omega_Q:\Irr(G\mid \vartheta)\to\Irr(\norm G Q\mid f_Q(\vartheta))$ and $\Psi_Q:\IBr(G\mid \vartheta)\to\IBr(\norm G Q\mid f_Q(\vartheta))$ such that $\Omega_Q(\chi)(1)/f_Q(\vartheta)(1)=\chi(1)/\vartheta(1)$ and $\Psi_Q(\varphi)(1)/f_Q(\vartheta)(1)=\varphi(1)/\vartheta(1)$ for every $\chi\in\Irr(G\mid \vartheta)$ and $\varphi\in\IBr(G\mid \vartheta)$. Since $f_Q(\vartheta)(1)$ divides $\vartheta(1)$ by \cite{Geck}, we conclude that $\Omega_Q(\chi)(1)$ divides $\chi(1)$ and that $\Psi_Q(\varphi)(1)$ divides $\varphi(1)$.
\end{proof}

\begin{rem}
\label{rem}
Observe that by the proof of Proposition \ref{prop:Above Glauberman with divisibility} it follows that the bijections $\Omega_Q$ and $\Psi_Q$ restrict to the subsets of $p'$-degree characters.
\end{rem}

Next, we consider a famous theorem of Fong \cite[Theorem 10.20]{N98}. If $G$ is $p$-solvable and $\vartheta\in\Irr(\oh{p'}{G})$ is $G$-invariant, then there exists a block $B\in\Bl(G)$ whose defect groups are Sylow $p$-subgroups and such that $\Irr(B)=\Irr(G|\vartheta)$. Our next result is used to replace Fong's theorem in the case where we consider a normal $p'$-subgroup $L$ of $G$ (but not necessarily $\oh{p'}G$) and an irreducible character $\vartheta\in\Irr(L)$ which is not necessarily $G$-invariant.

Before proceeding with the proof of the next proposition, we introduce some terminology. Let $G$ be a finite group, $B$ a block of $G$ with defect group $D$ and $L\normal G$ a $p'$-subgroup of $G$. Then, we say that a character $\vartheta\in\Irr(L)$ is $(B,D)$-\textbf{good} if $\Irr(B)\sbs\Irr(G|\vartheta)$ and $D$ is a defect group of the Fong--Reynolds correspondent of $B$ over $\bl(\vartheta)$. Notice that if $L\normal G$ is a $p'$-subgroup, then such a character $\vartheta$ always exists and is determined up to $\norm G D$-conjugation. In the following proof, we denote by $\h(\chi)$ the height of an irreducible ordinary or Brauer character.

\begin{pro}
\label{fongreynolds}
Let $G$ be a finite $p$-solvable group, $B$ a $p$-block of $G$ with defect group $D$ and consider its Brauer correspondent $b\in\Bl(\norm G D)$. If $L\normal G$ is a $p'$-subgroup and $\vartheta\in\Irr(L)$ is $(B,D)$-good, then there exists a subgroup $L\leq U\leq G_\vartheta$ and a $U$-invariant character $\nu\in\Irr(\oh{p'}{U})$ such that
\begin{enumerate}
\item induction $\psi\mapsto\psi^G$ defines a bijection $\Irr(U\mid\nu)\to\Irr(B)$;
\item induction $\varphi\mapsto\varphi^G$ defines a bijection $\IBr(U\mid\nu)\mapsto\IBr(B)$;
\item in the above bijections, $\Irr_{p'}(U\mid\nu)$ and $\IBr_{p'}(U\mid\nu)$ maps onto $\Irr_0(B)$ and $\IBr_0(B)$ respectively;
\item $D\in\Syl_p(U)$ and $\nu$ lies over $\vartheta$;
\item induction $\psi\mapsto\psi^{\norm G D}$ defines a bijection $\Irr(\norm U D\mid f_D(\nu))\to\Irr(b)$;
\item induction $\varphi\mapsto\varphi^{\norm G D}$ defines a bijection $\IBr(\norm U D\mid f_D(\nu))\to\IBr(b)$;
\item in the above bijections, $\Irr_{p'}(\norm U D\mid f_D(\nu))$ and $\IBr_{p'}(\norm U D\mid f_D(\nu))$ maps onto $\Irr_0(b)$ and $\IBr_0(b)$ respectively.
\end{enumerate}
\end{pro}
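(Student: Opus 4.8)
The plan is to argue by induction on $|G|$, using the Fong--Reynolds correspondence to strip off the normal $p'$-subgroup until we reach the situation handled by Fong's theorem \cite[Theorem 10.20]{N98}. The fact I would use repeatedly is that if $H\le G$ and blocks $c\in\Bl(H)$, $C\in\Bl(G)$ have a common defect group and induction defines bijections $\Irr(c)\to\Irr(C)$ and $\IBr(c)\to\IBr(C)$, then these bijections preserve heights (since $\psi^G(1)=|G:H|\psi(1)$ while the defect is unchanged), hence restrict to bijections between the height-zero subsets.

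Suppose first that $\vartheta$ is not $G$-invariant. As $L$ is a $p'$-group, $\bl(\vartheta)=\{\vartheta\}$, so we form the Fong--Reynolds correspondent $\tilde B\in\Bl(G_\vartheta)$ of $B$ over $\vartheta$; it has defect group $D$, and $\vartheta$ is $(\tilde B,D)$-good in $G_\vartheta$. Moreover $\vartheta$ is $D$-invariant (because $D\le G_\vartheta$), $\norm{G_\vartheta}D=(\norm G D)_{f_D(\vartheta)}$ by equivariance of the Glauberman correspondence, the Brauer correspondent $b$ covers $f_D(\vartheta)$, and the Brauer correspondent $\tilde b$ of $\tilde B$ in $\norm{G_\vartheta}D$ is the Fong--Reynolds correspondent of $b$ over $f_D(\vartheta)$. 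Hence induction gives height-preserving bijections $\Irr(\tilde B)\to\Irr(B)$, $\IBr(\tilde B)\to\IBr(B)$, $\Irr(\tilde b)\to\Irr(b)$, $\IBr(\tilde b)\to\IBr(b)$. Applying the induction hypothesis to $(G_\vartheta,\tilde B,D,L,\vartheta)$ and composing with these bijections proves the proposition for $G$. If instead $\vartheta$ is $G$-invariant, choose $\mu\in\Irr(\oh{p'}G\mid\vartheta)$ covered by $B$, replacing it by a $G$-conjugate so that $D\le G_\mu$; if $\mu$ is not $G$-invariant, the same reduction over $\mu$ (with $\oh{p'}G$ in place of $L$) passes to $(G_\mu,B_\mu,D,\oh{p'}G,\mu)$, and the induction hypothesis finishes.

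It remains to treat the case where $\vartheta$ and a character $\mu\in\Irr(\oh{p'}G\mid\vartheta)$ covered by $B$ are both $G$-invariant. Then $\Irr(B)\sbs\Irr(G\mid\mu)$, and by Fong's theorem the unique block of $G$ covering $\mu$ has Sylow defect groups and irreducible characters exactly $\Irr(G\mid\mu)$; since blocks partition $\Irr(G)$, this block is $B$, so $D\in\Syl_p(G)$ and $\Irr(B)=\Irr(G\mid\mu)$, $\IBr(B)=\IBr(G\mid\mu)$. Taking $U=G$, $\nu=\mu$ yields (1)--(4) at once, the height-zero characters of $B$ being the $p'$-degree ones because $D$ is Sylow. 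For (5)--(7), $p$-solvability together with $D\in\Syl_p(G)$ forces $\oh{p'}{\norm G D}=\cent{\oh{p'}G}D$ (a standard Hall--Higman argument); the character $f_D(\mu)\in\Irr(\oh{p'}{\norm G D})$ is $\norm G D$-invariant, and $D\in\Syl_p(\norm G D)$; since $b$ covers $f_D(\mu)$, Fong's theorem again identifies $b$ as the unique block of $\norm G D$ over $f_D(\mu)$, with $\Irr(b)=\Irr(\norm G D\mid f_D(\mu))$ and $\IBr(b)=\IBr(\norm G D\mid f_D(\mu))$, which gives (5)--(7) with $\norm U D=\norm G D$ and $f_D(\nu)=f_D(\mu)$.

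The delicate point is the compatibility used in the second paragraph: for a normal $p'$-subgroup $N\normal G$, $\mu\in\Irr(N)$, and a block $C\in\Bl(G\mid\mu)$ with a defect group $D\le G_\mu$, the Brauer correspondent of $C$ in $\norm G D$ should cover $f_D(\mu)$, and its Fong--Reynolds correspondent over $f_D(\mu)$ should coincide with the Brauer correspondent, in $\norm{G_\mu}D$, of the Fong--Reynolds correspondent of $C$ over $\mu$. I expect this compatibility of the Brauer correspondence with the Fong--Reynolds and Glauberman correspondences, together with the routine bookkeeping needed to keep the defect group $D$ fixed under the successive reductions, to be the main work; everything else is a direct combination of Clifford theory, the Fong--Reynolds theorem, Fong's theorem and standard properties of the Glauberman correspondence.
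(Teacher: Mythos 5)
Your proposal follows the same overall strategy as the paper's proof: induct, use the Fong--Reynolds correspondence to descend to $G_\vartheta$ when $\vartheta$ is not invariant, then replace $L$ by $\oh{p'}G$ and invoke Fong's theorem in the fully invariant case. Two of your bookkeeping choices are in fact cleaner than the paper's: you preserve heights by the direct count $\psi^G(1)=|G:H|\psi(1)$ combined with equality of defects (the paper routes the Brauer-character case through the Fong--Swan theorem instead), and your induction on $|G|$ works because every non-terminal reduction passes to a proper stabiliser, whereas the paper inducts on $|G:L|$ so as to enlarge $L$ to $\oh{p'}G$ without shrinking $G$. The one point you explicitly defer --- that the Brauer correspondent $\tilde b$ of $\tilde B=B_\vartheta$ in $\norm{G_\vartheta}D$ coincides with the Fong--Reynolds correspondent $b_\vartheta$ of $b$ over $\bl(f_D(\vartheta))$ --- is precisely where the paper does its real work, and you should supply the few lines that close it: since $D$ is a defect group of $b_\vartheta$, the induced block $(b_\vartheta)^{G_\vartheta}$ is defined, and transitivity of block induction gives $(b_\vartheta)^{G}=\bigl((b_\vartheta)^{\norm G D}\bigr)^{G}=b^{G}=B$; because the Fong--Reynolds bijection is itself implemented by block induction, this forces $(b_\vartheta)^{G_\vartheta}=B_\vartheta$, and Brauer's first main theorem then identifies $b_\vartheta$ as the Brauer correspondent of $B_\vartheta$, i.e.\ $\tilde b=b_\vartheta$. (Note that both you and the paper take for granted that $b$ covers $\bl(f_D(\vartheta))$ for the $(B,D)$-good choice of $\vartheta$, which is needed before one may even speak of the Fong--Reynolds correspondent of $b$ over $f_D(\vartheta)$.) With that paragraph added, your argument is complete and essentially equivalent to the paper's.
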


\begin{proof}
We argue by induction on $|G:L|$. Let $B_\vartheta\in\Bl(G_\vartheta)$ be the Fong--Reynolds correspondent of $B$ over $\bl(\vartheta)$ \cite[Theorem 9.14]{N98} and suppose that $G_\vartheta\leq G$. Since $\vartheta$ is $(B_\vartheta,D)$-good, the inductive hypothesis yields a subgroup $U<G_\vartheta$ and a $U$-invariant character $\nu\in\Irr(\oh{p'}U)$ satisfying the properties (i)--(vii) with respect to $(B_\vartheta, D)$. Now, induction of characters defines bijections $\Irr(U\mid\nu)\to \Irr(B_\vartheta)$ and $\IBr(U\mid\nu)\to\IBr(B_\vartheta)$. On the other hand, the Fong--Reynolds correspondence shows that induction of characters also defines bijections $\Irr(B_\vartheta)\to\Irr(B)$ and $\IBr(B_\vartheta)\to \IBr(B)$ and hence we obtain (i) and (ii).

Next, the inductive hypothesis implies that $\Irr_{p'}(U\mid\nu)$ maps onto $\Irr_0(B_\vartheta)$, which maps onto $\Irr_0(B)$ by part (b) of \cite[Theorem 9.14]{N98}. Let $\varphi\in\IBr_0(B)$ and $\beta\in\IBr(B_\vartheta)$ such that $\beta^G=\varphi$. By the Fong--Swan theorem \cite[Theorem 10.1]{N98} there is some $\psi\in\Irr(B_\vartheta)$ such that $\psi^0=\beta$ and $\h(\beta)=\h(\psi)$, where $\psi^0$ denotes the restriction of $\psi$ to $p$-regular elements. Observing that $\psi^G\in\Irr(B)$ and $\varphi=\beta^G=(\psi^0)^G=(\psi^G)^0$, we deduce that $\psi^G\in\Irr_0(B)$ and hence $\psi\in\Irr_0(B_\vartheta)$ by part (b) of  \cite[Theorem 9.14]{N98}. It follows that $\h(\beta)=0$ and thus (iii) holds. 

Let $f_D(\vartheta)\in\Irr(\cent L D)$ and observe that $\norm G D_{f_D(\vartheta)} = \norm G D_\vartheta=\norm{G_\vartheta}D$. Let $b_\vartheta\in\norm{G_\vartheta}D$ be the Fong--Reynolds correspondent of $b$ over $\bl(f_D(\vartheta))$. According to Brauer's first main theorem the induced block $(b_\vartheta)^{G_\vartheta}$ is defined and therefore $(b_{\vartheta})^{G}=((b_{\vartheta})^{\norm G D})^G=b^G=B$ by using \cite[Problem 4.2]{N98}. Now, the Fong--Reynolds correspondence implies that $(b_\vartheta)^{G_\vartheta}=B_\vartheta$ and so $b_{\vartheta}$ is the Brauer correspondent of $B_\vartheta$. By inductive hypothesis, induction of characters defines bijections $\Irr(\norm U D\mid f_D(\nu))\to\Irr(b_\vartheta)$ and $\IBr(\norm U D\mid f_D(\nu))\to \IBr(b_\vartheta)$. Arguing as in the previous paragraphs we conclude that (v), (vi) and (vii) are also satisfied. This shows that it is no loss of generality to assume $G_\vartheta=G$.

Finally, set $N:=\oh{p'}G$. If $N=L$, then the result follows from \cite[Theorem 10.20]{N98} by choosing $U=G$ and $\nu=\vartheta$. Thus, we assume $L<N$. Let $\eta\in\Irr(N)$ be $(B,D)$-good and notice that, since $\vartheta$ is $G$-invariant, Clifford's theorem implies that $\eta$ lies over $\vartheta$. Since $|G:N|<|G:L|$, the inductive hypothesis yields a subgroup $N\leq U\leq G_\eta$ and a $U$-invariant character $\nu\in\Irr(\oh{p'}U)$ satisfying the properties (i)-(vii) with respect to $N$ and $\eta$. Now, the result follows by noticing that $L\leq U\leq G_\vartheta$ and that $\nu$ covers $\vartheta$.
\end{proof}

The last result of this section is a generalization of the main result of \cite{N03} whose proof has been kindly provided to us by Navarro.

\begin{pro}[Navarro]
\label{gabriel}
Let $G$ be a $p$-solvable group, $U\leq G$ and consider a $p$-subgroup $P$ of $G$ such that $P\cap U\in\Syl_p(U)$. Then $|U:\norm U {P\cap U}|$ divides $|G:\norm G P|$.
\end{pro}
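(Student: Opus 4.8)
The plan is to argue by induction on $|G|$, passing to the quotient by a minimal normal subgroup. Throughout write $n_H(S)=|H:\norm H S|$ for a subgroup $S$ of a finite group $H$; the claim is then $n_U(Q)\mid n_G(P)$, where $Q:=P\cap U$. If $G=1$ there is nothing to prove, so assume $G\neq 1$ and choose a minimal normal subgroup $N$ of $G$; since $G$ is $p$-solvable, $N$ is an elementary abelian $q$-group for some prime $q$. For $X\leq G$ write $\overline X=XN/N$, and set $\overline Q=QN/N$ and $M=U\cap N$. The first point is that the hypothesis descends to the pair $(\overline U,\overline P)$ inside $\overline G=G/N$: since $N$ is elementary abelian, $\overline P\cong P/(P\cap N)$ is again a $p$-group, hence $\overline P\cap\overline U$ is a $p$-subgroup of $\overline U$; moreover $\overline Q\in\Syl_p(\overline U)$ (the natural map $U\to U/(U\cap N)$ induces a bijection $\Syl_p(U)\to\Syl_p(U/(U\cap N))$, whether $q=p$ or $q\neq p$), and $\overline Q\leq\overline P\cap\overline U$, so $\overline P\cap\overline U=\overline Q\in\Syl_p(\overline U)$. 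Thus the inductive hypothesis applies to $\overline G$ and gives $n_{\overline U}(\overline Q)\mid n_{\overline G}(\overline P)$.

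The next step is to compare the quotient quantities with $n_U(Q)$ and $n_G(P)$ by Frattini-type arguments. One has $\norm{\overline G}{\overline P}=\norm G{PN}/N$ outright, and $\norm{\overline U}{\overline Q}=\norm U Q\,N/N$ (by a Sylow argument: $Q\in\Syl_p(QN)$ when $q\neq p$, while $QN\cap U=Q$ when $q=p$). Since $\norm G P\leq\norm G{PN}$, it follows that $n_{\overline G}(\overline P)=|G:\norm G{PN}|$ divides $n_G(P)$; and a direct order computation gives $n_{\overline U}(\overline Q)=n_U(Q)\big/|M:M\cap\norm G Q|$.

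Now the argument divides according to $q$. If $q=p$, then $M$ is a normal $p$-subgroup of $U$, so $M\leq\oh p U\leq Q$; the correction factor $|M:M\cap\norm G Q|$ is then trivial, and $n_U(Q)=n_{\overline U}(\overline Q)\mid n_{\overline G}(\overline P)\mid n_G(P)$, as wanted. If $q\neq p$, I would instead invoke the sharper identities that the Frattini argument provides, namely $\norm G{PN}=\norm G P\,N$ and its analogue in $U$, which give $n_G(P)=n_{\overline G}(\overline P)\cdot|N:\cent N P|$ and $n_U(Q)=n_{\overline U}(\overline Q)\cdot|M:\cent M Q|$ (here $N\cap\norm G P=\cent N P$ and $M\cap\norm G Q=\cent M Q$ by coprime action). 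Because $Q\leq P$ one has $\cent N P\leq\cent N Q$, so $|N:\cent N Q|$ divides $|N:\cent N P|$; and $\cent M Q=M\cap\cent N Q$, so $|M:\cent M Q|=|M\,\cent N Q:\cent N Q|$ divides $|N:\cent N Q|$, and hence divides $|N:\cent N P|$. Plugging $n_{\overline U}(\overline Q)\mid n_{\overline G}(\overline P)$ into the two identities above and cancelling the common factor $|M:\cent M Q|$ then yields $n_U(Q)\mid n_G(P)$.

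I expect the case $q\neq p$ to be the crux, and it is where $p$-solvability really enters the picture: one cannot simply replace $G$ by $G/\oh{p'}G$, since $n_U(Q)$ need not divide the corresponding index in $G/\oh{p'}G$ (already false for $G=U=S_3$ and $p=2$). What makes the induction succeed is the precise control of the correction terms $|N:\cent N P|$ and $|M:\cent M Q|$, together with the inclusion $\cent N P\subseteq\cent N Q$ forced by $Q\subseteq P$, which is exactly what aligns the two factors so that the divisibility closes up. The only other thing requiring care — though it is routine — is checking that "$P\cap U\in\Syl_p(U)$" survives passage to $G/N$, and this goes through smoothly precisely because $PN/N$ is still a $p$-group.
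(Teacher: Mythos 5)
Your argument has the right overall shape, but it rests on a false structural claim: a minimal normal subgroup $N$ of a $p$-solvable group need \emph{not} be an elementary abelian $q$-group. Being $p$-solvable only forces every composition factor to be a $p$-group or a $p'$-group, so $N$ is either an elementary abelian $p$-group or a $p'$-group that may be a direct product of nonabelian simple groups (for instance, $N=A_5^{\,7}$ inside the $7$-solvable group $A_5\wr C_7$, where the Sylow $7$-subgroup is nontrivial, so the situation is not degenerate). Almost all of your computation survives this: the descent of the hypothesis to $G/N$, the Frattini identities $\norm G{PN}=\norm G P N$ and $\norm U{QM}=\norm U Q M$, and the identifications $N\cap\norm G P=\cent N P$, $M\cap\norm G Q=\cent M Q$ only use that $N$ is a normal $p'$-subgroup. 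The one step that genuinely uses commutativity is the assertion that $|M:\cent M Q|=|M\,\cent N Q:\cent N Q|$ divides $|N:\cent N Q|$: this requires $M\,\cent N Q$ to be a subgroup of $N$, which is automatic when $N$ is abelian but fails as an argument otherwise. That divisibility is true in general, but it is exactly the content of \cite[Lemma 2.1]{N03} (for a $p$-group $Q$ acting coprimely on a $p'$-group $N$ with $Q$-invariant subgroup $M$), i.e.\ precisely the nontrivial external input that the paper's proof invokes at the corresponding point. So the gap is localized: replace the appeal to ``$N$ elementary abelian'' by a case division into ``$N$ a $p$-group'' (where your $q=p$ argument is fine) and ``$N$ a $p'$-group'' (where you must cite \cite[Lemma 2.1]{N03} for $|M:\cent M Q|\mid |N:\cent N Q|$), and your induction closes up.

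For comparison, the paper's proof inducts on $|G:U|$ and $|G|$ rather than passing to a minimal normal subgroup: it first absorbs $\oh p G$ into $U$ and $P$ (your $q=p$ branch plays this role), reduces to $\oh p G=1$, and then quotients by $L=\oh{p'}G$, reducing the problem to showing $|L\cap U:\cent{L\cap U}Q|$ divides $|L:\cent L P|$ via the same Lemma 2.1. The two routes are essentially equivalent once your missing case is repaired.
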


\begin{proof}
We argue by induction, first on $|G:U|$, and then on $|G|$. Set $K:=\oh p G$ and $Q:=P\cap U$. Suppose that $K$ is not contained in $U$ and define $U_0=UK$ and $P_0=PK$. Since $U\cap P\leq U\cap P_0$ and $U\cap P_0$ is a $p$-subgroup of $U$, we deduce that $U\cap P_0=U\cap P$. Then, by Dedekind's Lemma we conclude that $U_0\cap P_0\in\Syl_p(U_0)$. Noticing that $|G:U_0|<|G:U|$, the inductive hypothesis implies that $|U_0:\norm {U_0}{P_0\cap U_0}|$ divides $|G:\norm G {P_0}|$ which divides $|G:\norm G P|$. Now, the result follows since $|U_0:\norm {U_0}{P_0\cap U_0}|=|U:\norm U Q|$ and hence we may assume that $K\leq U$. In this case, we may assume that $K=1$ by working in $G/K$.

Since $G$ is $p$-solvable, it follows that $L:=\oh{p'}G>1$. Then, by applying the inductive hypothesis to $G/L$, we deduce that $|UL:\norm {UL}{QL}|$ divides $|G:\norm G P L|$. Because $|UL:\norm {UL}{QL}|=|U:\norm U Q(L\cap U)|$, it suffices to show that $|\norm U Q(L\cap U):\norm U Q|$ divides $|L\norm G P:\norm G P|$, or equivalently, that $|L\cap U:\cent{L\cap U}Q|$ divides $|L:\cent L P|$. By \cite[Lemma 2.1]{N03}, it follows that $|L\cap U:\cent {L\cap U}{Q}|$ divides $|L:\cent L Q|$ which divides $|L:\cent L P|$. This concludes the proof.
\end{proof}

Notice that the above proposition immediately implies the main result of \cite{N03}. In fact, if we chose $P\in\Syl_p(G)$ such that $P\cap U\in\Syl_p(U)$, then Proposition \ref{gabriel} shows that the number of Sylow $p$-subgroups of $U$, i.e. $|U:\norm{U}{U\cap P}|$, divides the number of Sylow $p$-subgroups of $G$, i.e. $|G:\norm G P|$.

Before proceeding further, we remark the the assumptions made in the above proposition are necessary. First, consider $G=A_5$, $U=A_4$ and $P\in\Syl_3(U)$. In this case, $|U:\norm U P|=4$ does not divide $|G:\norm G P|=10$. This shows that the result fails if $G$ is not $p$-solvable. Next, let $G=S_4$, $U\in\Syl_2(G)$ and consider a subgroup $P$ of order $2$ in $\oh{2}{G}$. Now $|G:\norm G P|=3$ is not divisible by $|U:\norm U P|$ since $P$ is not normal in $U$. Therefore, the hypothesis $P\cap U\in\Syl_p(U)$ is also necessary.

\section{Proof of Theorem \ref{thm:AM with divisibility}}
\label{sec:Thm A}

We now proceed to prove Theorem \ref{thm:AM with divisibility} by induction on the order of $G$. First, we show that without loss of generality we may assume $B$ to be a block of maximal defect.

\begin{lem}
\label{lem 1}
We may assume that $B$ covers a $G$-invariant $\vartheta\in\Irr(\oh{p'}{G})$. In particular, we have $D\in\Syl_p(G)$, $\Irr(B)=\Irr(G\mid\vartheta)$ and $\IBr(B)=\IBr(G\mid \vartheta)$.
\end{lem}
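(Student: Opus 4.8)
The plan is to derive the lemma from Proposition~\ref{fongreynolds} together with the induction on $|G|$ under which Theorem~\ref{thm:AM with divisibility} is being proved. Set $N=\oh{p'}{G}$ and choose a $(B,D)$-good character $\vartheta\in\Irr(N)$, which exists by the remarks preceding Proposition~\ref{fongreynolds}. Applying that proposition with $L=N$ yields a subgroup $N\le U\le G_\vartheta$ and a $U$-invariant $\nu\in\Irr(\oh{p'}{U})$ satisfying (i)--(vii). We distinguish the cases $U=G$ and $U<G$. If $U=G$, then $\nu\in\Irr(\oh{p'}{G})$ is $G$-invariant, and parts (i), (ii), (iv) give directly that $B$ covers $\nu$, that $\Irr(B)=\Irr(G\mid\nu)$ and $\IBr(B)=\IBr(G\mid\nu)$, and that $D\in\Syl_p(G)$; thus $\vartheta:=\nu$ witnesses the conclusion and there is nothing more to prove.

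Assume then $U<G$. By Fong's theorem \cite[Theorem~10.20]{N98}, the $U$-invariant character $\nu$ determines a block $B_U\in\Bl(U)$ with $\Irr(B_U)=\Irr(U\mid\nu)$, $\IBr(B_U)=\IBr(U\mid\nu)$ and defect group $D\in\Syl_p(U)$; as $D$ is Sylow in $U$, having height zero in $B_U$ is the same as having $p'$-degree, whence $\Irr_0(B_U)=\Irr_{p'}(U\mid\nu)$ and $\IBr_0(B_U)=\IBr_{p'}(U\mid\nu)$. Let $b_U\in\Bl(\norm U D)$ be the Brauer correspondent of $B_U$; by the standard Fong theory underlying Proposition~\ref{fongreynolds} one has $\Irr_0(b_U)=\Irr_{p'}(\norm U D\mid f_D(\nu))$ and $\IBr_0(b_U)=\IBr_{p'}(\norm U D\mid f_D(\nu))$, and parts (v)--(vii) say precisely that induction carries these two sets bijectively onto $\Irr_0(b)$ and $\IBr_0(b)$, while parts (i)--(iii) do the same for $B_U$ and $B$. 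Since $U$ is $p$-solvable and $|U|<|G|$, the inductive hypothesis furnishes bijections $\Omega_U\colon\Irr_0(B_U)\to\Irr_0(b_U)$ and $\Psi_U\colon\IBr_0(B_U)\to\IBr_0(b_U)$ with $\Omega_U(\psi)(1)$ dividing $\psi(1)$ and $\Psi_U(\mu)(1)$ dividing $\mu(1)$.

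Composing $\Omega_U$ and $\Psi_U$ with the induction bijections of Proposition~\ref{fongreynolds} produces bijections $\Omega\colon\Irr_0(B)\to\Irr_0(b)$ and $\Psi\colon\IBr_0(B)\to\IBr_0(b)$. To see that these respect divisibility, note that for $\psi\in\Irr_0(B_U)$ we have $\psi^G(1)=|G:U|\,\psi(1)$ and $\Omega_U(\psi)^{\norm G D}(1)=|\norm G D:\norm U D|\,\Omega_U(\psi)(1)$. Proposition~\ref{gabriel}, applied to the subgroup $U$ and the $p$-subgroup $D$ (which is allowed since $D\cap U=D\in\Syl_p(U)$), shows that $|U:\norm U D|$ divides $|G:\norm G D|$, and therefore $|\norm G D:\norm U D|$ divides $|G:U|$; combined with $\Omega_U(\psi)(1)$ dividing $\psi(1)$ this gives that $\Omega(\psi^G)(1)$ divides $\psi^G(1)$, and the same computation applies to $\Psi$. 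Hence Theorem~\ref{thm:AM with divisibility} is already established for $(G,B,b)$ when $U<G$, and so in its proof we may assume $U=G$, that is, that $B$ covers a $G$-invariant character of $\oh{p'}{G}$.

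The delicate part is the bookkeeping in the case $U<G$: one has to be sure that the block $B_U$ of $U$ and its Brauer correspondent $b_U$ are exactly the blocks whose height zero characters appear in Proposition~\ref{fongreynolds}, so that the induction maps there are genuinely bijections onto $\Irr_0(B)$, $\IBr_0(B)$, $\Irr_0(b)$ and $\IBr_0(b)$. Once this is arranged, the only arithmetic input is the comparison of the indices $|G:U|$ and $|\norm G D:\norm U D|$, and this is precisely what Navarro's Proposition~\ref{gabriel} provides; without it the degrees produced by induction would carry no divisibility information.
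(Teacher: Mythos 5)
Your proposal is correct and follows essentially the same route as the paper's own proof: reduce via a $(B,D)$-good character and Proposition~\ref{fongreynolds} to a subgroup $U$ with a $U$-invariant $\nu\in\Irr(\oh{p'}{U})$, and when $U<G$ apply the inductive hypothesis to the Fong block of $U$ over $\nu$ and its Brauer correspondent (the paper cites \cite[Theorem 0.29]{manzwolf} for the local identification you describe as ``standard Fong theory''), then transfer the bijections by character induction and settle divisibility with Proposition~\ref{gabriel}. Your case split on $U=G$ versus $U<G$ is equivalent to the paper's split on whether $\vartheta$ is $G$-invariant, since with $L=\oh{p'}{G}$ the construction of Proposition~\ref{fongreynolds} returns $U=G$ exactly when $\vartheta$ is $G$-invariant.
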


\begin{proof}
Let $L:=\oh{p'}{G}$ and consider a character $\vartheta\in\Irr(L)$ whose block is covered by $B$. Without loss of generality we may assume that $\vartheta$ is $(B,D)$-good. If $\vartheta$ is $G$-invariant, then the result follows from \cite[Theorem 10.20]{N98}. Thus, we assume $G_\vartheta<G$ and consider a subgroup $U\leq G_\vartheta$ and a $U$-invariant character $\nu\in\Irr(\oh{p'}{U})$ as in Proposition \ref{fongreynolds}. By \cite[Theorem 10.20]{N98} there exists a unique block $C$ of $U$ covering $\bl(\nu)$ and we have $\Irr(C)=\Irr(U\mid \nu)$, $\IBr(C)=\IBr(U\mid \nu)$ and $D$ is a defect group of $C$. Furthermore, if $c$ is the Brauer correspondent of $C$ in $\norm U D$ and $f_D(\nu)$ is the $D$-Glauberman correspondent of $\nu$, then $\Irr(c)=\Irr(\norm U D\mid f_D(\nu))$ and $\IBr(c)=\IBr(\norm U D\mid f_D(\nu))$ according to \cite[Theorem 0.29]{manzwolf}. Since $U<G$, the inductive hypothesis implies that there exist bijections $ \Omega_0:\Irr_{p'}(U\mid\nu)\to \Irr_{p'}(\norm U D\mid f_D(\nu))$ and $\Psi_0:\IBr_{p'}(U\mid\nu)\to \IBr_{p'}(\norm U D\mid f_D(\nu))$ such that $\Omega_0(\psi)(1)$ divides $\psi(1)$ and $\Psi_0(\eta)(1)$ divides $\eta(1)$ for every $\psi\in\Irr_{p'}(U\mid \nu)$ and every $\eta\in\IBr_{p'}(U\mid \nu)$. Next, by applying Proposition \ref{fongreynolds}, we obtain bijections $\Omega:\Irr_0(B)\to\Irr_0(b)$ and $\Psi:\IBr_0(B)\to\IBr_0(b)$ by setting
\[\Omega(\psi^G):=\Omega_0(\psi)^{\norm G D}\]
and
\[\Psi(\eta^G):=\Psi_0(\eta)^{\norm G D}\]
for every $\psi\in\Irr_{p'}(U\mid \nu)$ and every $\eta\in\IBr_{p'}(U\mid \nu)$. Furthermore, by Proposition \ref{gabriel} we conclude that $\Omega(\psi^G)(1)=|\norm G D:\norm U D|\Omega_0(\psi)(1)$ divides $|G:U|\psi(1)=\psi^G(1)$ and, similarly, that $\Psi(\eta^G)(1)$ divides $\eta^G(1)$.
\end{proof}

Next, we show that it is no loss of generality to assume $K:=\oh{p}{G}=1$.

\begin{lem}
\label{lem 2}
Let $\kappa\in\Irr(K)$ and suppose that $\Irr_0(B\mid \kappa)$ is non-empty. Then $\kappa$ is linear and extends to its stabiliser in $G$.
\end{lem}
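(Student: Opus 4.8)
The plan is to separate the two assertions: linearity of $\kappa$ is immediate, and the extension statement is handled by a short Sylow-local argument using the Clifford correspondent of a height-zero character lying over $\kappa$.

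First I would record that, by Lemma~\ref{lem 1}, we may assume $D\in\Syl_p(G)$; since $B$ then has full defect, an irreducible character of $B$ has height zero exactly when its degree is prime to $p$. Fix $\chi\in\Irr_0(B\mid\kappa)$, so $p\nmid\chi(1)$. Clifford's theorem gives $\chi(1)=|G:G_\kappa|\,e\,\kappa(1)$ for some positive integer $e$, so $\kappa(1)\mid\chi(1)$; as $K=\oh pG$ is a $p$-group, $\kappa(1)$ is a power of $p$, and therefore $\kappa(1)=1$. This settles linearity, and it remains to extend $\kappa$ to its stabiliser $T:=G_\kappa$.

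For the extension I would pass to the Clifford correspondent $\psi\in\Irr(T\mid\kappa)$ of $\chi$, so that $\psi^G=\chi$ and in particular $\psi(1)\mid\chi(1)$ is prime to $p$; since $\kappa$ is now linear and $T$-invariant, $\psi_K=\psi(1)\kappa$. Then I would invoke the standard criterion \cite[Theorem~11.31]{Isa76}: a $T$-invariant character of $K\trianglelefteq T$ extends to $T$ provided it extends to every subgroup $K\le S\le T$ with $S/K$ a Sylow subgroup of $T/K$. For $S/K$ a Sylow $q$-subgroup with $q\ne p$ this is elementary: $\gcd(|K|,|S:K|)=1$, so $S=K\rtimes U$ by Schur--Zassenhaus, and $ku\mapsto\kappa(k)$ ($k\in K$, $u\in U$) is a well-defined linear extension of $\kappa$ to $S$ because $\kappa$ is $U$-invariant. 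For $S/K$ a Sylow $p$-subgroup of $T/K$ we have $S\in\Syl_p(T)$ and $K\le S$ (as $K$ is a normal $p$-subgroup of $T$); here I would restrict $\psi$ to $S$: since $\psi(1)$ is prime to $p$ while every irreducible character of the $p$-group $S$ has $p$-power degree, some irreducible constituent $\xi$ of $\psi_S$ is linear, and then $\xi_K\le(\psi_S)_K=\psi(1)\kappa$ together with $\xi(1)=1$ forces $\xi_K=\kappa$; thus $\xi$ extends $\kappa$ to $S$. Combining the two cases gives the extension of $\kappa$ to $T$.

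The only step carrying real content is the Sylow $p$-case: one must extract a \emph{linear} constituent of $\psi_S$ that still lies over $\kappa$, which is precisely what the combination ``$\kappa(1)=1$, so $\psi_K=\psi(1)\kappa$ with $p\nmid\psi(1)$'' plus ``$S$ a $p$-group'' delivers. I expect this to be where care is needed; the linearity of $\kappa$, the coprime case, and the Sylow-local extendibility criterion are routine or standard.
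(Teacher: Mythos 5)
Your proof is correct and follows essentially the same route as the paper: linearity of $\kappa$ from the fact that $B$ has full defect (Lemma \ref{lem 1}), then restriction of the Clifford correspondent $\psi$ to a Sylow $p$-subgroup of $G_\kappa$ to extract a linear constituent extending $\kappa$, followed by the Sylow-local extension criterion. The only difference is cosmetic: the paper delegates the primes $q\neq p$ to the cited extension theorem \cite[Theorem 5.11]{N18}, whereas you verify that coprime case explicitly via Schur--Zassenhaus.
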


\begin{proof}
First, observe that $B$ is a block of maximal defect by Lemma \ref{lem 1} and hence $\Irr_0(B)=\Irr_{p'}(B)$. In particular, the character $\kappa$ must be linear. Next, let $\chi\in\Irr_{p'}(B\mid\kappa)$ and denote by $\psi\in\Irr(G_\kappa)$ its Clifford correspondent over $\kappa$. Since $p$ does not divide the degree of $\chi$, it follows that $\psi\in\Irr_{p'}(G_\kappa)$ and that $G_\kappa$ contains some Sylow $p$-subgroup $P$ of $G$. Then, the restriction $\psi_P$ contains some linear constituent which must be an extension of $\kappa$. By \cite[Theorem 5.11]{N18}, we conclude that $\kappa$ extends to $G_\kappa$.
\end{proof}

We now construct a bijection for ordinary characters.

\begin{pro}
\label{prop:Ordinary}
If $K>1$, then there exists a bijection $\Omega:\Irr_0(B)\to\Irr_0(b)$ such that $\Omega(\chi)(1)$ divides $\chi(1)$ for every $\chi\in\Irr_0(B)$.
\end{pro}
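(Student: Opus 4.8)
The plan is to reduce to the situation $K := \oh{p}{G} > 1$ being a nontrivial normal $p$-subgroup and then use Clifford theory with respect to $K$ together with the inductive hypothesis of Theorem \ref{thm:AM with divisibility}. By Lemma \ref{lem 1} we may assume $B$ has maximal defect, so $D \in \Syl_p(G)$ and $\Irr_0(B) = \Irr_{p'}(B) = \Irr_{p'}(G \mid \vartheta)$ for a $G$-invariant $\vartheta \in \Irr(\oh{p'}{G})$. Since $K = \oh{p}{G} > 1$, every $\chi \in \Irr_0(B)$ lies over some $\kappa \in \Irr(K)$, and by Lemma \ref{lem 2} such a $\kappa$ is linear and extends to $G_\kappa$. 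First I would fix a $\norm G D$-transversal of the $G$-orbits of those linear $\kappa \in \Irr(K)$ with $\Irr_0(B \mid \kappa) \neq \emptyset$; note that $D$ acts on $\Irr(K)$ and, because $K \le D$, the linear characters fixed by $D$ restrict trivially to $K \cap [D,D]$, so one checks $\norm G D$ acts on this set compatibly with Brauer/Fong--Reynolds correspondence passing to $\norm G D$.

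The key step is: for a fixed $\kappa$, apply the Clifford correspondence over $\kappa$. If $G_\kappa < G$ then $\Irr_{p'}(B \mid \kappa)$ is in degree-dividing bijection (via induction, which multiplies degrees by $|G:G_\kappa|$, a $p'$-number on $p'$-degree characters since $G_\kappa \supseteq P$ for a Sylow $p$-subgroup) with $\Irr_{p'}(B_\kappa \mid \kappa)$ for the Fong--Reynolds-type correspondent block $B_\kappa$ of $G_\kappa$; by induction on $|G|$ applied to $G_\kappa$ we get a bijection to $\Irr_0(b_\kappa)$ of a Brauer correspondent in $\norm{G_\kappa}{D}$ with the divisibility property, and then we induce back up to $\norm G D$, again multiplying by $|\norm G D : \norm{G_\kappa}{D}|$, which by Proposition \ref{gabriel} (applied with $U = G_\kappa$, noting $D \cap G_\kappa = D \in \Syl_p(G_\kappa)$) divides $|G : G_\kappa|$; so the degrees still divide. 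The remaining case is $\kappa$ being $G$-invariant: then $\kappa$ extends to $\hat\kappa \in \Irr(G)$ by Lemma \ref{lem 2}, and Gallagher's theorem gives $\Irr(B \mid \kappa) = \{\hat\kappa\tau : \tau \in \Irr(G/K)\}$ with $(\hat\kappa\tau)(1) = \kappa(1)\tau(1) = \tau(1)$ since $\kappa$ is linear. The block $B$ dominates a block $\bar B$ of $G/K$ with $\Irr(\bar B)$ corresponding to these $\tau$, and $\bar B$ has defect group $D/K$; its Brauer correspondent $\bar b$ in $\norm{G/K}{D/K} = \norm G D / K$ is dominated by $b$ with the $\Irr_0$ sets matching up via the corresponding extension $\hat\kappa|_{\norm G D}$ of $\kappa$ (which still extends by the same Sylow argument) and Gallagher again. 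Applying the inductive hypothesis to $G/K$ (which has strictly smaller order) yields a degree-dividing bijection $\bar\Omega : \Irr_0(\bar B) \to \Irr_0(\bar b)$, and transporting through the two Gallagher parametrisations gives $\Omega(\hat\kappa\tau) := \hat\kappa|_{\norm G D} \cdot \bar\Omega(\tau)$, whose degree $\bar\Omega(\tau)(1)$ divides $\tau(1) = (\hat\kappa\tau)(1)$.

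Finally I would assemble the pieces: the disjoint union over the chosen transversal of $\kappa$'s covers all of $\Irr_0(B)$ (and respectively $\Irr_0(b)$, using that $D$-invariance of $\kappa$ matches $G$-invariance via $\norm G D$ controlling fusion of these linear characters and the Glauberman correspondent $f_D$ being compatible), so gluing the bijections gives the desired $\Omega : \Irr_0(B) \to \Irr_0(b)$ with $\Omega(\chi)(1) \mid \chi(1)$ throughout. The main obstacle I anticipate is the bookkeeping needed to ensure that the block-theoretic correspondences are compatible across all three operations in play — domination by $G \twoheadrightarrow G/K$, Fong--Reynolds with respect to $\oh{p}{G}$ rather than $\oh{p'}{G}$, and Brauer correspondence into $\norm G D$ — in particular verifying that the Brauer correspondent of $B_\kappa$ is $b_\kappa$ and that $\bar b$ really is dominated by $b$ with matching height-zero sets; this requires careful use of \cite[Theorem 9.14]{N98}, the theory of dominated blocks, and the normalizer computations $\norm G D_\kappa = \norm{G_\kappa}{D}$ already exploited in the proof of Proposition \ref{fongreynolds}.
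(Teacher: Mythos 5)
Your proposal follows essentially the same route as the paper's proof: partition $\Irr_0(B)$ and $\Irr_0(b)$ over $\norm{G}{D}$-classes of $D$-invariant linear characters $\kappa$ of $K$, use the Clifford correspondence together with Proposition \ref{gabriel} to reduce to the case of $G$-invariant $\kappa$, and then combine the extension $\wh{\kappa}$, Gallagher's theorem, Murai's block-compatibility and the inductive hypothesis applied to $G/K$. The only (harmless) imprecision is that for $G$-invariant $\kappa$ the characters $\eta\in\Irr(G/K)$ with $\eta\wh{\kappa}\in\Irr(B)$ may be spread over several blocks of $G/K$, so one must work with the whole set of such blocks (matched to blocks over $\norm{G}{D}$ via Brauer's first main theorem) rather than a single dominated block, exactly as the paper does.
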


\begin{proof}
Consider an $\norm G D$-transversal $\mathbb{T}$ in the set of $D$-invariant linear characters $\kappa\in\Lin(K)$. % such that $B$ covers $\bl(\kappa)$. Then, \cite[Lemma 1]{Mur04} implies that $\mathbb{T}$ is also an $\norm G D$-transversal in the set of characters $\kappa\in\Irr_{p'}(K)$ such that $b$ covers $\bl(\kappa)$. 
Using \cite[Lemma 9.3]{N18}, we obtain
\[\Irr_0(B)=\coprod\limits_{\kappa\in\mathbb{T}}\Irr_0(B\mid\kappa)\]
and
\[\Irr_0(b)=\coprod\limits_{\kappa\in\mathbb{T}}\Irr_0(b\mid\kappa).\]
Therefore, it is enough to find a bijection $\Irr_0(B\mid \kappa)\to\Irr_0(b\mid \kappa)$ with the divisibility of character degrees for every $\kappa\in\mathbb{T}$.

Fix $\kappa\in\mathbb{T}$ and set $c:=\bl(\kappa)$. Consider the set $\mathcal{B}$ of blocks $B''\in\Bl(G_\kappa)$ that cover $c$ and such that $(B'')^G=B$. Similarly, define $\mathcal{C}$ to be the set of blocks $b''\in\Bl(\norm{G}{D}_\kappa)$ that cover $c$ and such that $(b'')^{\norm G D}=b$. Notice that $\mathcal{B}$ coincides with the set of blocks containing the Clifford correspondent $\chi_\kappa$ of some $\chi\in\Irr(B)$ over $\kappa$. Moreover, since $\kappa$ is $D$-invariant and $D\in\Syl_p(G)$, it follows that $D$ is a defect group of every block $B''\in\mathcal{B}$. Similarly, $\mathcal{C}$ coincides with the set of blocks containing the Clifford correspondent $\psi_\kappa$ of some $\psi\in\Irr(b)$ over $\kappa$. As before, $D$ is a defect group of every block $b''\in\mathcal{C}$. We claim that induction of block defines a bijection $\mathcal{C}\to\mathcal{B}$, $b''\mapsto (b'')^{G_\kappa}$. First, observe that the map is well defined and, by Brauer's first main theorem, injective. Then, let $B''\in\mathcal{B}$ and consider its Brauer correspondent $b''\in\Bl(\norm{G}{D}_\kappa)$. Since $B''$ covers $c=\bl(\kappa)$ and $\kappa$ is $G_\kappa$-invariant, we deduce that $b''$ also covers $c$. Recalling that $(B'')^G=B$, by the Brauer correspondence we conclude that $(b'')^{\norm{G}{D}}$ coincides with $b$. Therefore $b''\in\mathcal{C}$ and the map is surjective. Now, by the Clifford correspondence we obtain bijections
\begin{equation}
\label{eq:Partition CLifford corr, 1}
\coprod\limits_{B''\in\mathcal{B}}\Irr(B''\mid \kappa)\to\Irr(B\mid \kappa)
\end{equation}
and
\begin{equation}
\label{eq:Partition CLifford corr, 2}
\coprod\limits_{b''\in\mathcal{C}}\Irr(b''\mid \kappa)\to\Irr(b\mid \kappa).
\end{equation}
Since these bijections are given by induction of characters, and recalling that $D\leq G_\kappa$, it follows that \eqref{eq:Partition CLifford corr, 1} and \eqref{eq:Partition CLifford corr, 2} restrict to the sets of irreducible characters of height zero, or equivalently, of $p'$-degree. Now, for all $\chi\in\Irr_0(B)$ there is a unique $B''\in\mathcal{B}$ and a unique $\psi\in\Irr_0(B'')$ such that $\psi^G=\chi$. If there exist bijections $\Omega_{B''}:\Irr_0(B'')\rightarrow \Irr_0(b'')$ such that $\Omega_{B''}(\psi)(1)$ divides $\psi(1)$, then we can define a bijection
\[\Omega:\Irr_0(B)\rightarrow\Irr_0(b)\]
by setting
\[\Omega(\chi):=\Omega_{B''}(\psi)^{\norm G D}\]
and we have $\Omega(\chi)(1)=\Omega_{B''}(\psi)(1)|\norm G D:\norm{G_\kappa}{D}|$ which divides $\psi(1)|G:G_\kappa|=\chi(1)$ by Proposition \ref{gabriel}.
Therefore, in order to conclude, it is no loss of generality to assume that $\kappa$ is $G$-invariant. 

Notice that $\kappa$ has an extension $\wh{\kappa}$ to the inertial subgroup $G_\kappa$ by Lemma \ref{lem 2}. By Gallagher's theorem, for every character $\chi\in\Irr(G\mid \kappa)$, there exists a unique $\eta\in\Irr(G/K)$ such that $\chi=\eta\wh{\kappa}$. Furthermore, if $\eta_1$ and $\eta_2$ lie in the same block, then so do $\eta_1\wh{\kappa}$ and $\eta_2\wh{\kappa}$ (see \cite[Corollary 1.5]{Mur96}). We denote by $\Bl(B,\wh{\kappa})$ the set of blocks $\overline{B}$ of $G/K$ containing a character $\eta$ such that $\eta\wh{\kappa}\in\Irr(B)$. Then
\begin{equation}
\label{eq:Step 1,4}
\Irr_0\left(B\enspace\middle|\enspace \kappa\right)=\coprod\limits_{\overline{B}\in\Bl(B,\wh{\kappa})}\left\lbrace\eta\wh{\kappa}\enspace\middle|\enspace\eta\in \Irr_0\left(\overline{B}\right)\right\rbrace.
\end{equation}
Similarly, if $\Bl\left(b,\wh{\kappa}_{\norm{G}{D}}\right)$ is the set of blocks $\overline{b}$ of $\norm{G}{D}/K$ containing a character $\eta$ such that $\eta\wh{\kappa}_{\norm{G}{D}}\in\Irr(b)$, then we obtain
\begin{equation}
\label{eq:Step 1,5}
\Irr_0\left(b\enspace\middle|\enspace \kappa\right)=\coprod\limits_{\overline{b}\in\Bl(b,\wh{\kappa}_{\norm{G}{D}})}\left\lbrace\sigma\wh{\kappa}_{\norm{G}{D}}\enspace\middle|\enspace\sigma\in \Irr_0\left(\overline{b}\right)\right\rbrace.
\end{equation}
Moreover, it follows by Brauer's first main theorem that induction of blocks yields a bijection $\Bl(b,\wh{\kappa}_{\norm{G}{D}})\to\Bl(B,\wh{\kappa})$ (see the proof of \cite[Corollary 2.5]{Mur98}). Now, as $K>1$, the inductive induction yields a bijection
\[\overline{\Omega}_{\overline{B}}:\Irr_0\left(\overline{B}\right)\to\Irr_0\left(\overline{b}\right)\]
such that $\overline{\Omega}_{\overline{B}}(\overline{\chi})(1)$ divides $\overline{\chi}(1)$ for every $\overline{B}\in\Bl(B,\wh{\kappa})$ with Brauer correspondent $\overline{b}\in\Bl(b,\wh{\kappa}_{\norm{G}{D}})$ and for every $\overline{\chi}\in\Irr_0(\overline{B})$.
Finally, we define 
\[\Omega\left(\chi\right):=\overline{\Omega}_{\overline{B}}\left(\eta\right)\wh{\kappa}_{\norm{G}{D}}\]
for every $\chi\in\Irr(B\mid \kappa)$, $\eta\in\Irr(G/K)$ and $\overline{B}$ such that $\chi=\eta\wh{\kappa}$ and $\bl(\eta)=\overline{B}$. Thanks to \eqref{eq:Step 1,4} and \eqref{eq:Step 1,5}, this defines a bijection between $\Irr_0(B\mid \kappa)$ and $\Irr_0(b\mid \kappa)$. Furthermore, since $\overline{\Omega}_{\overline{B}}(\eta)(1)$ divides $\eta(1)$, it follows that $\Omega(\chi)(1)$ divides $\chi(1)$. This completes the proof.
\end{proof}

Next, we consider the case of Brauer characters.

\begin{pro}
\label{prop:Brauer}
If $K>1$, then there exists a bijection $\Psi:\IBr_0(B)\to\IBr_0(b)$ such that $\Psi(\varphi)(1)$ divides $\varphi(1)$ for every $\varphi\in\IBr_0(B)$.
\end{pro}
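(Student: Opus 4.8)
The plan mirrors the proof of Proposition~\ref{prop:Ordinary}, but it simplifies substantially: since $K=\oh pG$ is a normal $p$-subgroup of $G$, it is contained in the kernel of every irreducible $p$-Brauer character of $G$, so there is no partition over linear characters of $K$ to carry out, and Proposition~\ref{gabriel} is not needed at all. By Lemma~\ref{lem 1} I may assume $D\in\Syl_p(G)$, so that $K\le D$ and $\IBr_0(B)=\IBr_{p'}(B)$. Inflation identifies $\IBr(G)$ with $\IBr(G/K)$ and $\IBr(\norm GD)$ with $\IBr(\norm GD/K)$.

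Let $\overline B\in\Bl(G/K)$ be the block of $G/K$ dominated by $B$. By the standard theory of block domination through a normal $p$-subgroup (see \cite[Chapter 9]{N98}, or the work of Murai used in the proof of Proposition~\ref{prop:Ordinary}) one has $\IBr(B)=\IBr(\overline B)$, and $D/K\in\Syl_p(G/K)$ is a defect group of $\overline B$, so that $\IBr_0(\overline B)=\IBr_{p'}(\overline B)$ and inflation restricts to a degree-preserving bijection $\IBr_0(B)\to\IBr_0(\overline B)$. Since $K$ is normal in $G$ and $K\le D$, we have $\norm{G/K}{D/K}=\norm GD/K$; letting $\overline b\in\Bl(\norm GD/K)$ be the block dominated by $b$, the same facts give a degree-preserving bijection $\IBr_0(b)\to\IBr_0(\overline b)$, and, because block induction is compatible with domination by a normal $p$-subgroup, $(\overline b)^{G/K}=\overline B$. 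Hence $\overline b$ is the Brauer correspondent of $\overline B$ in $\norm{G/K}{D/K}$.

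Finally, since $K>1$ we have $|G/K|<|G|$, so the inductive hypothesis of Theorem~\ref{thm:AM with divisibility}, applied to the group $G/K$ with block $\overline B$ and Brauer correspondent $\overline b$, yields a bijection $\overline\Psi\colon\IBr_0(\overline B)\to\IBr_0(\overline b)$ with $\overline\Psi(\overline\varphi)(1)$ dividing $\overline\varphi(1)$ for every $\overline\varphi\in\IBr_0(\overline B)$. Composing $\overline\Psi$ with the two degree-preserving identifications above produces the desired bijection $\Psi\colon\IBr_0(B)\to\IBr_0(b)$ with $\Psi(\varphi)(1)\mid\varphi(1)$. I expect the only delicate point to be the bookkeeping around block domination by the normal $p$-subgroup $K$ — namely the equalities $\IBr(B)=\IBr(\overline B)$ and $\IBr(b)=\IBr(\overline b)$, the identification of defect groups and heights, and the compatibility $(\overline b)^{G/K}=\overline B$ with Brauer correspondence — all of which are classical for a normal $p$-subgroup and require only careful referencing rather than new arguments.
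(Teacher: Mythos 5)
Your overall strategy --- pass to $\overline G=G/K$ via block domination, apply the inductive hypothesis there, and pull back through inflation, which preserves degrees --- is the same as the paper's. However, there is a genuine gap in the step where you declare $\overline B$ to be ``the'' block of $G/K$ dominated by $B$ and assert $\IBr(B)=\IBr(\overline B)$. Domination through a normal $p$-subgroup $K$ is one-to-one in the direction $\Bl(G/K)\to\Bl(G)$ (each block of $G/K$ is dominated by a unique block of $G$), but a single block $B$ of $G$ may dominate several blocks of $G/K$; what is true is only $\IBr(B)=\coprod_{\overline B}\IBr(\overline B)$, the union running over all blocks of $G/K$ dominated by $B$. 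A concrete instance satisfying every hypothesis in force here ($G$ $p$-solvable, $K=\oh{p}{G}>1$, $B$ of maximal defect) is $G=S_3$, $p=3$, $K=A_3$: then $G$ has a unique $3$-block $B$ with $\IBr(B)=\{1,\mathrm{sgn}\}$, while $G/K\cong C_2$ has two $3$-blocks of defect zero, both dominated by $B$. So the single degree-preserving identification $\IBr_0(B)\to\IBr_0(\overline B)$ that your argument relies on does not exist, and likewise on the local side for $b$.

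The repair is precisely the bookkeeping you hoped to avoid: partition $\IBr_0(B)$ over the set $\Bl(\overline G,B)$ of blocks of $\overline G$ dominated by $B$, partition $\IBr_0(b)$ over the set $\Bl(\overline{\norm G D},b)$ of blocks dominated by $b$, and then verify that block induction yields a bijection $\Bl(\overline{\norm G D},b)\to\Bl(\overline G,B)$ compatible with the Brauer correspondence; this matching is the nontrivial point that replaces your single equality $(\overline b)^{G/K}=\overline B$, and it is where \cite[Proposition 2.4 (a)]{Nav-Spa14} enters. Once the two partitions are matched, your argument applies verbatim to each pair $(\overline B,\overline b)$: the defect groups are $\overline D=D/K$, one has $\norm{\overline G}{\overline D}=\overline{\norm G D}$, the inductive hypothesis gives the divisibility blockwise, and inflation preserves degrees. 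This is exactly how the paper's proof proceeds.
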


\begin{proof}
Define $\overline{G}:=G/K$. Let $\Bl(\overline{G},B)$ be the set of blocks of $\overline{G}$ dominated by $B$ and $\Bl(\overline{\norm G D},b)$ the set of blocks of $\overline{\norm G D}$ dominated by $b$. Observe that $\overline{\norm G D}=\norm{\overline{G}}{\overline{D}}$ and that $\overline{D}$ is a defect group of every block in $\Bl(\overline{\norm G D},b)$ according to \cite[Theorem 9.9]{N98}. Then, block induction defines an injection $\Bl(\overline{\norm G D}, b)\to\Bl(\overline{G})$ by the Brauer's correspondence. Furthermore, this restricts to a bijection $\Bl(\overline{\norm G D}, b)\to\Bl(\overline{G},B)$ according to \cite[Proposition 2.4 (a)]{Nav-Spa14}. Let $\overline{B}\in\Bl(\overline{G},B)$ and consider its Brauer correspondent $\overline{b}\in\Bl(\overline{\norm G D})$. Noticing that $|\overline{G}|<|G|$, by inductive hypothesis we obtain a bijection $\overline{\Psi}_{\overline{B}}:\IBr_0(\overline{B})\to\IBr_0(\overline{b})$ such that $\overline{\Psi}_{\overline{B}}(\overline{\varphi})(1)$ divides $\overline{\varphi}(1)$ for every $\overline{\varphi}\in\IBr_0(\overline{B})$. Since $K\leq \ker\varphi$ for every $\varphi\in\IBr(G)$, we obtain a bijection
\[\Psi:\IBr_0(B)\to\IBr_0(b)\]
by setting
\[\Psi(\overline{\varphi}):=\overline{\Psi}_{\overline{B}}(\overline{\varphi})\]
for every $\overline{B}\in\Bl(\overline{G},B)$, every $\overline{\varphi}\in\IBr_0(\overline{B})$ and where we identify $\overline{\varphi}$ and $\overline{\Psi}_{\overline{B}}(\varphi)$ via inflation with the corresponding Brauer characters of $G$ and $\norm G D$ respectively. Since inflation of characters does not affect character degrees, we conclude that $\Psi(\varphi)(1)$ divides $\varphi(1)$ for every $\varphi\in\IBr_0(B)$.
\end{proof}

As an immediate consequence of Proposition \ref{prop:Ordinary} and Proposition \ref{prop:Brauer} we deduce that it is no loss of generality to assume $K=1$.

\begin{cor}
\label{cor:K=1}
We may assume that $\oh{p}{G}=1$.
\end{cor}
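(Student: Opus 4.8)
The plan is simply to assemble the two preceding propositions. We are in the middle of the inductive proof of Theorem \ref{thm:AM with divisibility} (induction on $|G|$), and after Lemma \ref{lem 1} we have already reduced to the situation where $B$ covers a $G$-invariant $\vartheta\in\Irr(\oh{p'}{G})$, so that $D\in\Syl_p(G)$; in particular the standing hypotheses of Propositions \ref{prop:Ordinary} and \ref{prop:Brauer} are in force. First I would observe that if $K:=\oh{p}{G}>1$, then Proposition \ref{prop:Ordinary} directly produces a bijection $\Omega:\Irr_0(B)\to\Irr_0(b)$ with $\Omega(\chi)(1)$ dividing $\chi(1)$ for all $\chi\in\Irr_0(B)$, and Proposition \ref{prop:Brauer} directly produces a bijection $\Psi:\IBr_0(B)\to\IBr_0(b)$ with $\Psi(\varphi)(1)$ dividing $\varphi(1)$ for all $\varphi\in\IBr_0(B)$.

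These two bijections are precisely the conclusion of Theorem \ref{thm:AM with divisibility} for the pair $(B,b)$. Hence the theorem already holds whenever $\oh{p}{G}\neq 1$, and therefore in the remainder of the argument it is no loss of generality to assume $\oh{p}{G}=1$. There is no genuine obstacle at this step: all the substantive work has been carried out in Propositions \ref{prop:Ordinary} and \ref{prop:Brauer} (and, upstream of those, in the reductions of Lemmas \ref{lem 1} and \ref{lem 2}); the only thing to verify is that we are entitled to invoke those propositions, which we are, since the reduction to $D\in\Syl_p(G)$ from Lemma \ref{lem 1} is already available and is compatible with the further normalisation $\oh p G=1$ made here.
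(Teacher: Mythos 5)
Your argument is correct and coincides with the paper's: the corollary is stated there as an immediate consequence of Propositions \ref{prop:Ordinary} and \ref{prop:Brauer}, exactly as you assemble it, with the standing reductions from Lemma \ref{lem 1} already in force. Nothing further is needed.
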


We are now ready to prove Theorem \ref{thm:AM with divisibility}.

\begin{proof}[Proof of Theorem \ref{thm:AM with divisibility}]
Let $N:=\oh{p',p}{G}$ and $L:=\oh{p'}{G}$. By Lemma \ref{lem 1} we know that $D\in\Syl_p(G)$ and that there exists some $G$-invariant character $\vartheta$ of $L$ whose block is covered by $B$. In particular, we deduce that $Q:=N\cap D$ is a Sylow $p$-subgroup of $N$ and $N=LQ$. If $Q\unlhd G$, then Corollary \ref{cor:K=1} implies that $Q=1$ and hence $G$ is a $p'$-group, in which case the result follows trivially. Therefore, we may assume that $\norm G Q < G$.

Observe that $\norm G D\leq \norm G Q$ and consider the block $C:=b^{\norm G Q}$. The inductive hypothesis yields bijections
\[\Omega_C:\Irr_0(C)\to \Irr_0(b)\]
and
\[\Psi_C:\IBr_0(C)\to\IBr_0(b)\]
such that $\Omega_C(\psi)(1)$ divides $\psi(1)$ and $\Psi_C(\varrho)(1)$ divides $\varrho(1)$ for every $\psi\in\Irr_0(C)$ and every $\varrho\in\IBr_0(C)$. Next, recall that $\Irr_0(B)=\Irr_{p'}(G\mid \vartheta)$ and that $\IBr_0(B)=\IBr_{p'}(G\mid \vartheta)$ according to Lemma \ref{lem 1}. Furthermore, \cite[Theorem 0.29]{manzwolf} implies that $\Irr_0(C)=\Irr_{p'}(\norm G Q\mid f_Q(\vartheta))$ and that $\IBr_0(C)=\Irr_{p'}(\norm G Q\mid f_Q(\vartheta))$. Then, applying Proposition \ref{prop:Above Glauberman with divisibility} and Remark \ref{rem} we obtain bijections
\[\Omega_Q:\Irr_0(B)\to\Irr_0(C)\]
and
\[\Psi_Q:\IBr_0(B)\to\IBr_0(C)\]
such that $\Omega_Q(\chi)(1)$ divides $\chi(1)$ and $\Psi_Q(\varphi)(1)$ divides $\varphi(1)$ for every $\chi\in\Irr_0(B)$ and $\varphi\in\IBr_0(B)$. Now, the result follows by setting $\Omega:=\Omega_C\circ\Omega_Q$ and $\Psi:=\Psi_C\circ\Psi_Q$.
\end{proof}

We end this section with a final remark. The bijections given by Proposition \ref{prop:Above Glauberman with divisibility} and Proposition \ref{fongreynolds} can be shown to be compatible with decomposition numbers. As a consequence, it is natural to ask whether the same can be said for the bijections obtained in Theorem \ref{thm:AM with divisibility}. More precisely, we ask the following question.

\begin{que}
\label{question}
Suppose that $G$ is a $p$-solvable groups and let $B$ be a $p$-block of $G$ with Brauer correspondent $b$. Are there bijections $\Omega:\Irr_0(B)\to\Irr_0(b)$ and $\Psi:\IBr_0(B)\to\IBr_0(b)$ such that
\[d_{\chi\varphi}=d_{\Omega(\chi)\Psi(\varphi)}\]
for every $\chi\in\Irr_0(B)$ and every $\varphi\in\IBr_0(B)$?
\end{que}

Although not considered directly in this paper, we think that the above question is worth pointing out and will be assessed in a future investigation.

\section{Proof of Theorem \ref{thm:Dimension divisibility}}
\label{sec:Thm B}
Recall that by \cite[Theorem 3.14]{N98}, if $B$ is a block of a finite group $G$, then $\dim(B)=\sum_{\chi\in\Irr(B)}\chi(1)^2$. Using this equality, together with Proposition \ref{fongreynolds} and Proposition \ref{gabriel}, we can prove Theorem \ref{thm:Dimension divisibility}.

\begin{proof}[Proof of Theorem \ref{thm:Dimension divisibility}]
We argue by induction on the order of $G$. Let $U$ and $\nu\in\Irr(\oh{p'}{U})$ be as in Proposition \ref{fongreynolds} and observe that
\begin{equation}
\label{eq:Proof B}
\dim(B)=\sum_{\chi\in\Irr(B)}\chi(1)^2=\sum_{\psi\in\Irr(U\mid\nu)}\psi^G(1)^2=\sum_{\psi\in\Irr(U\mid\nu)}|G:U|^2\psi(1)^2.
\end{equation}
Assume first that $U<G$. By the inductive hypothesis, and using the fact that $\Irr(U\mid\nu)$ is a full block of $B'\in\Bl(U)$ by \cite[Theorem 10.20]{N98}, we deduce that
\[\dim(B')=\sum_{\psi\in\Irr(U\mid\nu)}\psi(1)^2\]
is divisible by
\[\dim(b')=\sum_{\varphi\in\Irr(\norm U D\mid f_D(\nu))}\varphi(1)^2\]
where $b'\in\Bl(\norm U D)$ is the Brauer correspondent of $B'$ and $\Irr(b')=\Irr(\norm U D \mid f_D(\nu))$ according to \cite[Theorem 0.29]{manzwolf}. We conclude that
\begin{align*}
\dim(b)&=\sum_{\varrho\in\Irr(b)}\varrho(1)^2=\sum_{\varphi\in\Irr(\norm G D \mid f_D(\nu))}\varphi^{\norm G D}(1)^2
\\
&=|\norm G D:\norm U D|^2\sum_{\varphi\in\Irr(\norm U D \mid f_D(\nu))}\varphi(1)^2
\end{align*}
divides $\dim(B)$ by \eqref{eq:Proof B} and Proposition \ref{gabriel}.

Thus, we may assume $U=G$. In this case, if $L=\oh{p'}G$, there exists some $G$-invariant character $\vartheta\in\Irr(L)$ such that $\Irr(B)=\Irr(G\mid\vartheta)$ and $\Irr(b)=\Irr(\norm G D\mid f_D(\vartheta))$. Now 
\[|G:L|\vartheta(1)=\vartheta^G(1)=\frac{1}{\vartheta(1)}\sum_{\chi\in\Irr(G\mid\vartheta)}\chi(1)^2\]
and
\[|\norm G D:\cent L D|f_D(\vartheta)(1)=f_D(\vartheta)^{\norm G D}(1)=\frac{1}{f_D(\vartheta)(1)}\sum_{\varphi\in\Irr(\norm G D\mid f_D(\vartheta))}\varphi(1)^2\]
so
$\dim(B)={|G:L|}\vartheta(1)^2$ and $\dim(b)=|\norm G D:\cent L D| f_D(\vartheta)(1)^2$. To conclude, observe that $|\norm G D:\cent L D|$ divides $|G:L|$ since $\cent L D=L\cap\norm G D$ and that $f_D(\vartheta)(1)$ divides $\vartheta(1)$ by the main result of \cite{Geck}.
\end{proof}

Theorem \ref{thm:Dimension divisibility} shows that the Brauer correspondence is compatible with divisibility of block dimensions for $p$-solvable groups. As mentioned in the introduction, this result does not hold for arbitrary finite groups. These kind of questions about the dimension of blocks are usually not easy to answer in full generality. However, something can be said when we consider normal subgroups. We conclude this section by including a nice result that was communicated to us by Kessar and Linckelmann. 

In what follows, we fix a $p$-modular system $(\mathcal{O}, K,k)$ where $\mathcal{O}$ is a complete discrete valuation ring with residue field $k=\mathcal{O}/J(\mathcal{O})$ of characteristic $p$ and field of fractions $K$ of characteristic $0$. We also assume that $K$ and $k$ are splitting fields for the groups considered below. Then, we assume that all ordinary characters are realised over $K$. Given a block $B$ of $G$, we denote by $\rho_B$ the regular character of $B$, that is, $\rho_B:=\sum_\chi \chi(1)\chi$ where $\chi$ runs over the characters belonging to $B$. Equivalently, $\rho_B$ is the character afforded by the left $\mathcal{O}G$-module $\mathcal{O}Ge_B$ where $e_B$ is the central idempotent corresponding to $B$.

\begin{pro}[Kessar--Linckelmann]
\label{prop:Kessar-Linckelmann}
Let $N$ be a normal subgroup of $G$ and consider blocks $B\in\Bl(G)$ and $b\in\Bl(N)$. If $B$ covers $b$, then 
\[(\rho_B)_N=n\sum\limits_{g\in [G/G_b]}(\rho_b)^g\]
for some positive integer $n$ and where $[G/G_b]$ denotes a set of representatives for the left $G_b$-cosets in $G$. In particular, $\dim(b)$ divides $\dim(B)$.
\end{pro}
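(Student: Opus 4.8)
The plan is to prove the displayed identity $(\rho_B)_N = n\sum_{g\in[G/G_b]}(\rho_b)^g$ first, and then read off the divisibility $\dim(b)\mid\dim(B)$ as an immediate consequence by evaluating both sides at $1$. Indeed, once the identity holds, evaluating at the identity gives $\dim(B) = (\rho_B)(1) = n\,|G:G_b|\,(\rho_b)(1) = n\,|G:G_b|\dim(b)$, since each conjugate $(\rho_b)^g$ has the same value at $1$ as $\rho_b$, and the number of cosets is $|G:G_b|$; hence $\dim(b)$ divides $\dim(B)$.

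For the identity itself, the natural approach is module-theoretic rather than character-theoretic, using the description of $\rho_B$ as the character afforded by $\mathcal{O}Ge_B$. First I would recall that since $B$ covers $b$, by Clifford theory the central idempotent $e_b$ of $\mathcal{O}Nb$ has $\mathrm{Tr}^G_{G_b}(e_b) = \sum_{g\in[G/G_b]} e_b^{\,g}$ equal to a sum of orthogonal block idempotents of $\mathcal{O}N$ (the $G$-conjugates of $b$), and $e_B$ lies in $\mathcal{O}G\cdot\mathrm{Tr}^G_{G_b}(e_b)$; concretely $e_B \cdot \bigl(\sum_g e_b^{\,g}\bigr) = e_B$. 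Restricting the $\mathcal{O}G$-module $\mathcal{O}Ge_B$ to $N$, one gets $(\mathcal{O}Ge_B)_N$, and using the decomposition $1 = \sum_i f_i$ of $1$ in $\mathcal{O}N$ into block idempotents one obtains $(\mathcal{O}Ge_B)_N = \bigoplus_{g\in[G/G_b]} e_b^{\,g}\cdot(\mathcal{O}Ge_B)_N$ as an internal direct sum of $\mathcal{O}N$-submodules, these summands being permuted transitively by $G$ (acting by right multiplication by $g^{-1}$, say, which conjugates $e_b$ to $e_b^{\,g}$). Hence all summands are isomorphic as $\mathcal{O}N$-modules up to the conjugation twist, and the summand at $e_b$ is a direct sum of copies of $\mathcal{O}Nb$, say $n$ copies for some positive integer $n$ (this uses that $\mathcal{O}Ge_B$ restricted to $N$ and then projected to the block $b$ is a projective, or at least a free-over-$\mathcal{O}Nb$-as-a-module-over-itself, $\mathcal{O}Nb$-module — more precisely, $e_b\,\mathcal{O}G\,e_B$ is a free left $\mathcal{O}Nb$-module because $\mathcal{O}Ge_B$ is free as a left $\mathcal{O}N$-module, being a direct summand of the free module $\mathcal{O}G$, and $e_b$ is a central idempotent of $\mathcal{O}N$). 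Translating back to characters, $(\rho_B)_N = n\sum_{g\in[G/G_b]}(\rho_b)^g$.

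The main obstacle I anticipate is pinning down the integer $n$ and justifying that the $e_b$-component of $(\mathcal{O}Ge_B)_N$ is genuinely a \emph{free} $\mathcal{O}Nb$-module (so that its character is an integral multiple of $\rho_b$), as opposed to merely having a character that is a nonnegative-rational multiple of $\rho_b$. The clean way to see this: $\mathcal{O}Ge_B$ is a direct summand of $\mathcal{O}G$ as an $(\mathcal{O}G,\mathcal{O}G)$-bimodule, hence in particular as a left $\mathcal{O}N$-module it is a direct summand of $\mathcal{O}G\cong(\mathcal{O}N)^{|G:N|}$, so it is free over $\mathcal{O}N$; then $e_b$ being central in $\mathcal{O}N$, the module $e_b\,\mathcal{O}Ge_B$ is a direct summand of $(e_b\mathcal{O}N)^{|G:N|} = (\mathcal{O}Nb)^{|G:N|}$, and since $\mathcal{O}Nb$ need not be local one must argue that a direct summand of a free $\mathcal{O}Nb$-module which ``looks like'' $\mathcal{O}Nb^{\oplus n}$ in characters really is $\mathcal{O}Nb^{\oplus n}$; here one uses that $\mathcal{O}Ge_B$ is moreover a direct summand of $\mathcal{O}G$ as a \emph{right} $\mathcal{O}G$-module and a fortiori as a right $\mathcal{O}N$-module, which forces $e_b\mathcal{O}Ge_B$ to be a direct summand of $\mathcal{O}Ge_B$ as $(\mathcal{O}N,\mathcal{O}N)$-bimodule, combined with the fact that $\mathcal{O}Nb$, being a \emph{symmetric} $\mathcal{O}$-algebra, has the property that a finitely generated module whose character equals $m\cdot\rho_b$ and which is relatively $\mathcal{O}$-projective (here, $\mathcal{O}$-free) and whose endomorphism ring contains enough idempotents is free. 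Rather than belabour this, the argument of Kessar and Linckelmann presumably proceeds directly with bimodules: $\mathcal{O}Ge_B$ is an indecomposable $(\mathcal{O}G,\mathcal{O}G)$-bimodule direct summand of $\mathcal{O}G$, its restriction to $(\mathcal{O}N,\mathcal{O}N)$ decomposes according to the $G\times G$-orbits of pairs of $N$-blocks, and the diagonal-type Green correspondence / Clifford theory for bimodules identifies the $(b,b)$-component with $n$ copies of $\mathcal{O}Nb$ as an $(\mathcal{O}Nb,\mathcal{O}Nb)$-bimodule; restricting this bimodule statement to the left gives the character identity. I would therefore structure the write-up as: (1) reduce divisibility to the character identity; (2) prove the character identity via the bimodule decomposition of $\mathcal{O}Ge_B$ restricted to $N$, citing standard Clifford theory of blocks for the transitivity and freeness; (3) evaluate at $1$.
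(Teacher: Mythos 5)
Your overall strategy coincides with the paper's: establish the character identity by decomposing $\mathcal{O}Ge_B$ as an $(\mathcal{O}N,\mathcal{O}N)$-bimodule, then evaluate at $1$ to get the divisibility. The reduction to a single orbit differs only cosmetically: you decompose $1=\sum_g e_b^g$ over the $G$-orbit of $b$ and let $G$ permute the components, whereas the paper reduces to $G$-invariant $b$ via Fong--Reynolds, writing $\rho_B=|G:G_b|(\rho_{B'})^G$ and applying the Mackey formula. Both are fine, and your evaluation-at-$1$ step is correct.

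The place where your write-up is not yet a proof is exactly the point you flag. Your primary argument --- $\mathcal{O}Ge_B$ is a left $\mathcal{O}N$-module direct summand of $\mathcal{O}G\cong(\mathcal{O}N)^{|G:N|}$, hence ``free'', hence $e_b\mathcal{O}Ge_B$ is free over $\mathcal{O}Nb$ --- only yields projectivity, and a projective $\mathcal{O}Nb$-module need not be free; showing that its character is an integer multiple of $\rho_b$ is precisely the statement to be proved, so it cannot be used to upgrade projective to free. Your fallback (``the $(b,b)$-component is $n$ copies of $\mathcal{O}Nb$ by Clifford theory for bimodules'') is the right idea but is asserted rather than proved, and as stated it is slightly off: the summands are copies of $\mathcal{O}Nb$ only up to a twist of the \emph{right} action by elements of $G$; what is untouched is the left module structure, which is all that the left character sees. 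The concrete lemma that closes the gap, and which is what the paper's proof supplies, is this: for $b$ $G$-invariant, $\mathcal{O}Ge_b=\bigoplus_{x\in[G/N]}\mathcal{O}Ne_bx$ as $(\mathcal{O}N,\mathcal{O}N)$-bimodules, and each $\mathcal{O}Ne_bx$ is an \emph{indecomposable} bimodule, because its bimodule endomorphism ring is isomorphic to $Z(\mathcal{O}Ne_b)$, which is local since $e_b$ is a primitive idempotent of $Z(\mathcal{O}N)$. Since $\mathcal{O}Ge_B=\mathcal{O}Ge_Be_b$ is a bimodule direct summand of $\mathcal{O}Ge_b$, the Krull--Schmidt theorem forces $\mathcal{O}Ge_B\simeq\bigoplus_{x\in\mathcal{S}}\mathcal{O}Ne_bx$ for some subset $\mathcal{S}\subseteq[G/N]$, each summand affording $\rho_b$ as a left $\mathcal{O}N$-module, whence $(\rho_B)_N=|\mathcal{S}|\,\rho_b$. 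I would replace your freeness/symmetric-algebra discussion with this indecomposability-plus-Krull--Schmidt step; no appeal to projectivity or to lifting idempotents is then needed.
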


\begin{proof}
First, we show that it is no loss of generality to assume that $b$ is $G$-invariant. Let $B'$ be the Fong-Reynolds correspondent of $B$ over $b$ and notice that
\begin{align*}
\rho_B &=\sum\limits_{\chi\in\Irr(B)}\chi(1)\chi=\sum\limits_{\psi\in\Irr(B')}|G:G_b|\psi(1)\psi^G
\\
&=|G:G_b|\left(\sum\limits_{\psi\in\Irr(B')}\psi(1)\psi\right)^G=|G:G_b|(\rho_{B'})^G
\end{align*}
according to \cite[Theorem 9.14]{N98}. Then, using the Mackey formula we deduce that
\begin{equation}
\label{eq:Kessar-Linckelmann 0}
(\rho_B)_N=|G:G_b|((\rho_{B'})^G)_N=|G:G_b|\sum\limits_{g\in [G/G_b]}((\rho_{B'})^g)_N.
\end{equation}
Now, if $(\rho_{B'})_N=n\rho_b$ for some positive integer $n$, then $((\rho_{B'})^g)_N=n(\rho_b)^g$ and the result follows from \eqref{eq:Kessar-Linckelmann 0}. Therefore, it is enough to consider the case where $b$ is $G$-invariant.

Now, consider $\mathcal{O}Ge_b$ as a $\mathcal{O}N$-$\mathcal{O}N$-bimodule. If $[G/N]$ denotes a set of representatives for the $N$-cosets in $G$, then we have a decomposition into indecomposable $\mathcal{O}N$-$\mathcal{O}N$-bimodules
\[\mathcal{O}G=\bigoplus\limits_{x\in[G/N]}\mathcal{O}Nx\]
which implies that
\begin{equation}
\label{eq:Kessar-Linckelmann 1}
\mathcal{O}Ge_b=\bigoplus\limits_{x\in[G/N]}\mathcal{O}Ne_bx.
\end{equation}
Then, since $\mathcal{O}Ge_B$ is a direct summand of $\mathcal{O}Ge_b$ by the previous paragraph and recalling that each $\mathcal{O}Ne_bx$ is indecomposable, the Krull--Schmidt theorem \cite[Theorem 4.6.7]{LinI} applied to \eqref{eq:Kessar-Linckelmann 1} implies that there exists a subset $\mathcal{S}\subseteq [G/N]$ such that
\begin{equation}
\label{eq:Kessar-Linckelmann 2}
\mathcal{O}Ge_B\simeq \bigoplus\limits_{x\in\mathcal{S}}\mathcal{O}Ne_bx.
\end{equation}
Noticing that $\mathcal{O}Ne_bx$ is isomorphic to $\mathcal{O}Ne_b$ as a left $\mathcal{O}N$-module, we deduce that each $\mathcal{O}Ne_bx$ affords the same character $\rho_b$. On the other hand, when viewed as a left $\mathcal{O}N$-module, $\mathcal{O}Ge_B$ affords the character $(\rho_B)_N$ and hence $(\rho_B)_N=|\mathcal{S}|\rho_b$ by \eqref{eq:Kessar-Linckelmann 2}. The second part of the result follows since $\dim(B)=\rho_B(1)$ and $\dim(b)=\rho_b(1)$.
\end{proof}

It is interesting to notice that the above result seems to be extremely hard to prove when we only relay on character theory. It would be interesting to know whether a character theoretic proof of the above result can actually be found.

\vspace{1cm}

DEPARTMENT DE MATEM\`ATIQUES, UNIVERSITAT DE VAL\`ENCIA, 46100 BURJASSOT, VAL\`ENCIA, SPAIN.

\textit{Email address:} \href{mailto:josep.m.martinez@uv.es}{josep.m.martinez@uv.es}

DEPARTMENT OF MATHEMATICS, CITY, UNIVERSITY OF LONDON, EC1V 0HB, UNITED KINGDOM.

\textit{Email address:} \href{mailto:damiano.rossi@city.ac.uk}{damiano.rossi@city.ac.uk}

\end{document}